\documentclass[12pt,a4paper]{article}

\usepackage{latexsym}
\usepackage{amsmath}
\usepackage{amssymb}
\usepackage{amsthm}
\usepackage{amscd}
\usepackage{mathrsfs}
\usepackage[all]{xy}
\usepackage{graphicx}

\setlength{\oddsidemargin}{-15pt}
\setlength{\textwidth}{\paperwidth}
\addtolength{\textwidth}{-2in}
\addtolength{\textwidth}{-2\oddsidemargin}

\setlength{\topmargin}{-60pt}
\setlength{\textheight}{730pt}

\newtheorem{thm}{Theorem}[section]
\newtheorem{prop}[thm]{Proposition}

\newtheorem{lem}[thm]{Lemma}

\newtheorem{rem}[thm]{Remark}

\theoremstyle{remark}

\makeatletter

\@addtoreset{equation}{section}
\makeatother

\makeatletter
\newcommand{\subsubsubsection}{\@startsection{paragraph}{4}{\z@}%
 {1.0\Cvs \@plus.5\Cdp \@minus.2\Cdp}%
 {.1\Cvs \@plus.3\Cdp}%
 {\reset@font\sffamily\normalsize}
 }
\makeatother
\setcounter{secnumdepth}{4}

\DeclareMathOperator{\Fr}{Fr}
\DeclareMathOperator{\Gal}{Gal}

\DeclareMathOperator{\tr}{tr}

\DeclareMathOperator{\End}{End}
\DeclareMathOperator{\Hom}{Hom}

\DeclareMathOperator{\Spf}{Spf}

\DeclareMathOperator{\Ker}{Ker}

\DeclareMathOperator{\Ind}{Ind} 

\DeclareMathOperator{\Tr}{Tr}
\DeclareMathOperator{\Nr}{Nr}
\DeclareMathOperator{\Trd}{Trd}
\DeclareMathOperator{\Nrd}{Nrd}
\DeclareMathOperator{\Irr}{Irr}
\DeclareMathOperator{\Disc}{Disc}
\DeclareMathOperator{\Image}{Im}

\newcommand{\cf}{\textit{cf.\ }}

\begin{document}

\title
{Geometric realization of the local Langlands correspondence 
for representations of conductor three}
\author{Naoki Imai and Takahiro Tsushima} 

\date{}

\maketitle

\footnotetext{2010 \textit{Mathematics Subject Classification}. 
 Primary: 11F80; Secondary: 11G20.}

\begin{abstract}
We prove a realization of the local Langlands correspondence 
for two-dimensional representations of a Weil group 
of conductor three 
in the cohomology of Lubin--Tate curves 
by a purely local geometric method. 
\end{abstract}

\section{Introduction}
Let $K$ be a non-archimedean local field 
with a residue field $k$ of characteristic $p$. 
Let $\mathfrak{p}$ be the maximal ideal of 
the ring of integers of $K$. 
We take an algebraic closure $K^{\mathrm{ac}}$ of $K$. 
Let $K^{\mathrm{ur}}$ be the 
maximal unramified extension of $K$ 
inside $K^{\mathrm{ac}}$. 
Let $\widehat{K}^{\mathrm{ac}}$ and 
$\widehat{K}^{\mathrm{ur}}$ denote 
the completions of $K^{\mathrm{ac}}$ and 
$K^{\mathrm{ur}}$ respectively. 
For a natural number $n$, 
we write $\mathrm{LT}(\mathfrak{p}^n)$ 
for the Lubin--Tate curve with 
full level $n$ structure over $\widehat{K}^{\mathrm{ur}}$. 
We write $W_K$ for the Weil group of $K$. 
Let $D$ be the central division algebra over $K$ of 
invariant $1/2$. 
Let $\ell$ be a prime number different from $p$. 
We take an algebraic closure 
$\overline{\mathbb{Q}}_{\ell}$ of 
$\mathbb{Q}_{\ell}$. 
Then the groups $W_K$, $\textit{GL}_2 (K)$ and $D^{\times}$ 
act on 
$\varinjlim_n H^1 _{\mathrm{c}} 
 \left( \mathrm{LT}(\mathfrak{p}^n)_{\widehat{K}^{\mathrm{ac}}}, 
 \overline{\mathbb{Q}}_{\ell} \right)$, 
and these actions partially 
realize the local Langlands correspondence 
and the local Jacquet--Langlands correspondence for 
$\textit{GL}_2$ over $K$. 
This realization was proved by 
Carayol in \cite{CarNaLT} 
using global automorphic arguments. 
However there is no known proof using only 
a local geometric method. 

In this paper, we give a purely local proof of 
a realization of the local Langlands correspondence 
for two-dimensional $W_K$-representations 
of conductor three, 
using a description of a semi-stable reduction of 
a Lubin--Tate curve in \cite{ITStab3}. 
A conductor of a representation of a Weil group 
means the Artin conductor exponent of it. 
We note that 
three is the smallest integer which is 
a conductor of 
a primitive two-dimensional $W_K$-representation. 
The calculation in 
\cite{ITStab3} is done by purely local methods. 
In \cite{ITStab3}, the actions of 
$W_K$ and $D^{\times}$ on a Lubin--Tate curve 
are calculated in some sense. 
Using the calculation, 
we can study representations of $W_K$ and $D^{\times}$ 
in the cohomology of Lubin--Tate curves. 
On the other hand, 
we have already known the relation between 
representations of $\textit{GL}_2 (K)$ and $D^{\times}$ 
in the cohomology 
by \cite[Proposition 2.1]{ITStab3}, 
which is based on a realization of 
the local Jacquet--Langlands correspondence 
proved in \cite{MieGeomJL} 
by purely local methods. 
Therefore, we can study the relation between 
representations of $W_K$ and $\textit{GL}_2 (K)$ 
in the cohomology. 
This enables us to show a realization of 
the local Langlands correspondence in 
the cohomology of Lubin--Tate curves. 

In the study of a realization of 
the local Langlands correspondence for $\textit{GL}_2$, 
the most difficult and interesting case 
is the dyadic case, which means 
the case where $p=2$. 
A proof in the case where $p$ is odd 
is similar and easier. 
Therefore, we have decided to write a proof only 
in the dyadic case. 

In the dyadic case, 
the irreducible two-dimensional $W_K$-representations of 
conductor three are primitive. 
In a construction of the local 
Langlands correspondence for 
primitive representations in \cite{KutLcGl2}, 
Weil representations are used (\cf \cite[\S 12]{BHLLCGL2}). 
On the other hand, 
our descriptions of representations of 
$\textit{GL}_2 (K)$ in the cohomology of Lubin--Tate curves 
are given by cuspidal types. 
Therefore, it is a non-trivial problem 
to check that the described 
representations in the cohomology 
actually match under the local Langlands correspondence. 

We explain the contents of this paper. 
In Section \ref{LTsp}, 
we recall definitions of Lubin--Tate curves, and 
give an easy consequence of a cohomological 
result in \cite{ITStab3}. 
In Section \ref{sstred}, 
we recall a description of a semi-stable reduction of 
a Lubin--Tate curve in \cite{ITStab3}. 

In Section \ref{cohell}, 
we study the first degree etale cohomology of an elliptic curve, 
which appears in the semi-stable reduction of 
the Lubin--Tate curve as an irreducible component. 
The cohomology of this elliptic curve gives a 
primitive $W_K$-representation of conductor three. 

In Section \ref{realcor}, 
we show that a correspondence of 
explicitly described representations 
appear in the cohomology of Lubin--Tate curves. 
In Section \ref{LLCe}, 
we show that the correspondence 
obtained in Section \ref{realcor} 
is actually the local Langlands correspondence 
by calculating epsilon factors. 
In other word, we give a description of 
the local Langlands correspondence via 
cuspidal type for representation of conductor three. 
To determine the sign of an epsilon factor, 
we calculate the Artin map for 
a wildly ramified abelian extension 
with a non-trivial ramification filtration 
by reducing it to the equal characteristic case 
using results in \cite{Delp0}. 

Following a suggestion of a referee, 
we give comments on related progresses 
after this paper was written. 
A semi-stable reduction of a Lubin--Tate curve 
studied in this paper 
is generalized to higher dimensional cases 
as reductions of 
affinoids in Lubin--Tate perfectoid spaces 
in \cite{ITsimptame} and \cite{ITsimpwild}. 
Some arguments in Section \ref{LLCe} 
of this paper is generalized to 
$\mathit{GL}_n$-cases 
in \cite{ITlgsw1} 
to study an explicit description of 
the local Langlands correspondence for 
simple supercuspidal representations of $\mathit{GL}_n$. 
On the other hand, 
we use 
the Deligne--Laumon product formula in \cite{ITlgsw1}, 
which is not of purely local nature. 
In \cite{TsuLTodd}, 
a purely local proof of the non-abelian Lubin--Tate theory 
for $\mathit{GL}_2$ is given in the 
odd equal characteristic case. 
A purely local proof of the non-abelian Lubin--Tate theory 
for a primitive Galois representation is 
not yet known outside the case studied in this paper. 

\subsection*{Acknowledgment}
The authors thank Yoichi Mieda and 
Seidai Yasuda for helpful conversations. 
They are grateful to a referee for 
comments and suggestions, which help them to 
improve expositions. 

\subsection*{Notation}
In this paper, we use the following notation. 
For a field $F$, 
an algebraic closure of $F$ is denoted by 
$F^{\mathrm{ac}}$ and 
a separable closure of $F$ is denoted by 
$F^{\mathrm{sep}}$. 
For a Galois extension $E$ over $F$, let 
$\Gal (E/F)$ denote the Galois group of the extension. 
Let $K$ be a non-archimedean local field. 
Let $\mathcal{O}_K$ denote the 
ring of integers of $K$ 
and $k$ its residue field of characteristic $p>0$. 
Let $\mathfrak{p}$ be the maximal ideal of 
$\mathcal{O}_K$. 
Let $q=|k|$. 
For $\xi \in k$, let 
$\hat{\xi} \in \mu_{q-1}(K) \cup \{ 0\}$ 
denote the lift of $\xi$. 
For a finite extension $F$ of $K$, let 
$W_F$ denote the Weil group of $F$ 
and $I_F$ denote the inertia subgroup of $W_F$. 
For a finite extension $F$ of $K$ and 
a Galois extension $L$ of $F$ in $F^{\mathrm{sep}}$, 
let $W(L/F)$ denote the image of 
$W_F$ in $\Gal (L/F)$. 
Let $K^{\mathrm{ur}}$ 
denote the maximal 
unramified extension of $K$ 
in $K^{\mathrm{ac}}$. 
The completions of $K^\mathrm{ac}$ and 
$K^\mathrm{ur}$ are denoted 
by $\widehat{K}^{\mathrm{ac}}$ and 
$\widehat{K}^{\mathrm{ur}}$ respectively. 
We write 
$\mathcal{O}_{\widehat{K}^{\mathrm{ac}}}$ and 
$\mathcal{O}_{\widehat{K}^{\mathrm{ur}}}$ for 
the rings of integers of 
$\widehat{K}^{\mathrm{ac}}$ and 
$\widehat{K}^{\mathrm{ur}}$ respectively. 
For an element 
$a \in \mathcal{O}_{\widehat{K}^{\mathrm{ac}}}$, 
we write 
$\bar{a}$ for the image of $a$ 
by the reduction map 
$\mathcal{O}_{\widehat{K}^{\mathrm{ac}}} \to k^\mathrm{ac}$. 
We fix a uniformizer $\varpi$ of $K$. 
Let $v(\cdot)$ denote the valuation of 
$K^{\mathrm{ac}}$ such that $v(\varpi)=1$. 
Let $\lvert \cdot \rvert_K$ be the 
absolute value of $K$ such that 
$\lvert \varpi \rvert_K =q^{-1}$. 
For $a,b \in K^{\mathrm{ac}}$ and 
a rational number $\alpha \in 
\mathbb{Q}_{\geq 0}$, 
we write $a \equiv b \pmod{\alpha}$
if we have $v(a-b) \geq \alpha$, 
and 
$a \equiv b \pmod{\alpha+}$
if we have $v(a-b) > \alpha$. 
For a local ring $A$, the maximal ideal of $A$ 
is denoted by $m_A$. 
For an algebraic curve $X$ over $k^{\mathrm{ac}}$, 
we denote by $X^c$ 
the smooth compactification 
of the normalization of $X$. 
For an affinoid 
$\mathbf{X}$, we write 
$\overline{\mathbf{X}}$ for its reduction. 
The category of sets is denoted by 
$\mathbf{Set}$. 
For a representation $\pi$ of a group, 
the dual representation of $\pi$ is denoted by 
$\pi^*$. 
We take rational powers of $\varpi$ compatibly 
as needed. 

\section{Lubin--Tate curve}\label{LTsp}
Let $n$ be a natural number. 
We put 
\[
 K_1 (\mathfrak{p}^n) =\biggl\{ 
 \begin{pmatrix}
 a & b \\
 c & d
 \end{pmatrix} 
 \in \textit{GL}_2 (\mathcal{O}_K ) \ \bigg| \ 
 c \equiv 0 ,\ d \equiv 1 \bmod \mathfrak{p}^n \biggr\}. 
\]
In the following, we define 
the connected Lubin--Tate curve 
$\mathbf{X}_1 (\mathfrak{p}^n )$ with level $K_1 (\mathfrak{p}^n)$. 

Let $\Sigma$ 
denote a formal $\mathcal{O}_K$-module of 
dimension $1$ and height $2$ over $k^{\mathrm{ac}}$, 
which is unique up to isomorphism. 
Let $\mathcal{C}$ 
be the category of 
Noetherian complete local 
$\mathcal{O}_{\widehat{K}^{\mathrm{ur}}}$-algebras 
with residue field $k^{\mathrm{ac}}$. 
For a one-dimensional formal 
$\mathcal{O}_K$-module 
$\mathcal{F} =\Spf A[[X]]$ over 
$A \in \mathcal{C}$ and an element 
$a \in \mathcal{O}_K$, 
we write 
$[a]_{\mathcal{F}}(X) \in A[[X]]$ 
for the $a$-multiplication 
on $\mathcal{F}$. 
For a formal one-dimensional $\mathcal{O}_K$-module 
$\mathcal{F} =\Spf A[[X]]$ over $A \in \mathcal{C}$ and 
an $A$-valued point $P$ of $\mathcal{F}$, 
the corresponding element of $m_A$ 
is denoted by $x(P)$. 
We consider the functor
\[
 \mathcal{A}_1 (\mathfrak{p}^n) \colon \mathcal{C} \to \mathbf{Set}
\]
which sends $A \in \mathcal{C}$ to the set of isomorphism 
classes of triples $(\mathcal{F},\iota,P)$, 
where 
$\mathcal{F}$ is a formal $\mathcal{O}_K$-module over $A$ 
with an isomorphism 
$\iota \colon \Sigma \simeq \mathcal {F} \otimes_A k^{\mathrm{ac}}$ and 
$P$ is a $\varpi^n$-torsion point of $\mathcal{F}$ 
such that 
\[
 \prod_{a \in \mathcal{O}_K /\mathfrak{p}^n} 
 \bigl( X- x ([a]_{\mathcal{F}} (P)) \bigr) \biggm| 
 [\varpi^n ]_{\mathcal{F}}(X) 
\]
in $A[[X]]$. This functor is represented by a regular local 
ring $\mathcal{R}_1(\mathfrak{p}^n)$. 
We write $\mathfrak{X}_1(\mathfrak{p}^n)$ for 
$\Spf \mathcal{R}_1(\mathfrak{p}^n)$. 
Its generic fiber 
is denoted by $\mathbf{X}_1(\mathfrak{p}^n)$, 
which we call the connected Lubin--Tate curve 
with level $K_1 (\mathfrak{p}^n)$. 
The space  $\mathbf{X}_1(\mathfrak{p}^n)$ 
is a rigid analytic curve over ${\widehat{K}^{\mathrm{ur}}}$. 
We can define the connected Lubin--Tate curve 
$\mathbf{X}(\mathfrak{p}^n)$ with full level $n$ structure 
by changing $P$ to be 
an $\mathcal{O}_K$-module 
homomorphism
$\phi \colon (\mathcal{O}_K/\mathfrak{p}^n)^2 \to m_A$ 
such that 
\[
 \prod_{a \in (\mathcal{O}_K/\mathfrak{p}^n)^2}
 ( X-\phi(a) ) \biggm| 
 [\varpi^n ]_{\mathcal{F}}(X)
\]
in $A[[X]]$. 
For $i \in \mathbb{Z}$, 
we can define a rigid analytic curve 
$\mathbf{X}(\mathfrak{p}^n)^{(i)}$ 
over ${\widehat{K}^{\mathrm{ur}}}$ 
by changing an isomorphism $\iota$ 
in the definition of $\mathbf{X}(\mathfrak{p}^n)$ to 
a quasi-isogeny 
$\iota \colon \Sigma \to \mathcal {F} \otimes_A k^{\mathrm{ac}}$ 
of height $i$. 
We put 
\[
 \mathrm{LT}(\mathfrak{p}^n) = 
 \coprod_{i \in \mathbb{Z}} 
 \mathbf{X}(\mathfrak{p}^n)^{(i)} . 
\]

Let $D$ be the central 
division algebra over $K$ of invariant $1/2$. 
We write $\mathcal{O}_D$ 
for the ring of integers of $D$. 
For a positive integer $m$, 
let $K_m$ be the unramified extension of $K$ of degree $m$ 
and $k_m$ be the finite extension over $k$ 
of degree $m$. 
Let $\kappa \in \Gal (K_2/K)$ be 
the non-trivial element. 
The ring $\mathcal{O}_D$ 
has the following description: 
$\mathcal{O}_D = \mathcal{O}_{K_2} \oplus \varphi \mathcal{O}_{K_2}$ 
with $\varphi^2=\varpi$ and 
$a \varphi=\varphi a^{\kappa}$ for 
$a \in \mathcal{O}_{K_2}$. 
We define an action of $\mathcal{O}_D$ on 
$\Sigma$ by 
$\zeta(X)=\bar{\zeta} X$ for 
$\zeta \in \mu_{q^2 -1} (\mathcal{O}_{K_2} )$ 
and $\varphi(X)=X^q$. 
Then this gives an isomorphism 
\begin{equation}\label{eq:ODEnd}
 \mathcal{O}_D \simeq \End (\Sigma) 
\end{equation}
by \cite[Proposition 13.10]{HGEvecLT}. 
Using the isomorphism \eqref{eq:ODEnd}, 
we can define a left action of 
$\mathcal{O}_D^{\times}$ on 
$\mathbf{X}_1(\mathfrak{p}^n)$ and 
$\mathbf{X}(\mathfrak{p}^n)$. 
We can define also 
a left action of 
$D^{\times}$ on 
$\mathrm{LT}(\mathfrak{p}^n)$ 
using the isomorphism 
$D \simeq \End (\Sigma) [1/\varpi]$ 
induced by \eqref{eq:ODEnd}. 

Let $\ell$ be a prime number different from $p$. 
We take an $\ell$-adic compactly supported cohomology of 
a rigid analytic space by regarding it as an adic space 
(\cf \cite{HubCompl}). 
We take an algebraic closure 
$\overline{\mathbb{Q}}_{\ell}$ of 
$\mathbb{Q}_{\ell}$. 
We put 
\[
 H^1 _{\mathrm{LT}} = 
 \varinjlim_n H^1 _{\mathrm{c}} 
 \left( \mathrm{LT}(\mathfrak{p}^n)_{\widehat{K}^{\mathrm{ac}}}, 
 \overline{\mathbb{Q}}_{\ell} \right). 
\]
Then we can define an action of 
$\mathit{GL}_2 (K) \times W_K \times D^{\times}$ 
on $H^1 _{\mathrm{LT}}$ 
(\cf \cite[3.2, 3.3]{DatLTel}). 

We write 
$\Irr (D^{\times},\overline{\mathbb{Q}}_{\ell})$ 
for the set of 
isomorphism classes of irreducible smooth representations of 
$D^{\times}$ over 
$\overline{\mathbb{Q}}_{\ell}$, 
and 
$\Disc (\textit{GL}_2 (K),\overline{\mathbb{Q}}_{\ell})$ 
for the set of 
isomorphism classes of irreducible 
discrete series representations of 
$\textit{GL}_2 (K)$ over 
$\overline{\mathbb{Q}}_{\ell}$. 
Let 
\[
 \mathrm{JL} \colon 
\Irr (D^{\times},\overline{\mathbb{Q}}_{\ell}) 
 \to \Disc (\textit{GL}_2 (K),\overline{\mathbb{Q}}_{\ell}) 
\]
be the local Jacquet--Langlands correspondence. 
We denote by $\mathrm{LJ}$ the inverse of 
$\mathrm{JL}$. 
Let 
$\Nrd_{D/K} \colon D^{\times} \to K^{\times}$ 
be the reduced norm map, and 
$\Trd_{D/K} \colon D\to K$ 
be the reduced trace map.

\begin{prop}\label{realJL}
For a supercuspidal representation $\pi$ of $\textit{GL}_2 (K)$ 
over $\overline{\mathbb{Q}}_{\ell}$, 
we have 
\[
 \Hom_{GL_2 (K)} (H^1 _{\mathrm{LT}} ,\pi) \simeq 
 \mathrm{LJ} (\pi)^{\oplus 2}
\]
as representations of $D^{\times}$. 
\end{prop}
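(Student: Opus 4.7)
The plan is to extract the proposition as a direct consequence of Mieda's main theorem in \cite{MieGeomJL}, so the proof should be a citation together with two small checks. Mieda's geometric realization of Jacquet--Langlands for the Lubin-Tate tower of $\mathit{GL}_n$, applied in the case $n=2$, computes the $\pi$-isotypic part of $H^1_{\mathrm{LT}}$ for a cuspidal $\pi$ in terms of $\mathrm{LJ}(\pi)$; the multiplicity $2$ reflects the dimension of the corresponding Langlands parameter and appears already on the geometric side through the rank of the relevant sheaves on the reduction of $\mathrm{LT}(\mathfrak{p}^n)$. So the first step is simply to quote the statement of Mieda's theorem in the form
\[
 \Hom_{\mathit{GL}_2(K)}\bigl(H^1_{\mathrm{LT}},\pi\bigr)
 \simeq \mathrm{LJ}(\pi)^{\oplus 2}
\]
as $D^{\times}$-representations, which is precisely the $n=2$ specialization of the $D^{\times}$-structure on the $\pi$-isotypic component.

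The second step is the mild extension from \cite{MieGeomJL}'s hypotheses to the generality we need. Mieda works under the assumption that $K$ has characteristic zero, but his argument is entirely local and geometric: it uses formal $\mathcal{O}_K$-modules, the structure of the reduction of the Lubin-Tate tower, and nearby/vanishing cycle computations, none of which are sensitive to whether $K$ has mixed or equal characteristic. I would therefore go through the steps of Mieda's argument and verify that each ingredient (the representability results for $\mathfrak{X}_1(\mathfrak{p}^n)$ and related spaces, the actions of $\mathcal{O}_D^{\times}$ described via $\mathcal{O}_D=\mathcal{O}_{K_2}\oplus\varphi\mathcal{O}_{K_2}$, and the comparison of compactly supported cohomologies at finite level with the tower) transports without change to the equal characteristic setting. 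This is the remark already flagged by the authors just before the proposition.

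The conceptual work, which is \emph{not} part of the proof of this proposition, is entirely in \cite{MieGeomJL}: the geometric identification of $\mathrm{LJ}(\pi)$ in the cohomology by purely local means. The main potential obstacle here is therefore only one of translation: matching the normalization conventions of \cite{MieGeomJL} (for instance, on $\Hom$ vs.\ tensor, left vs.\ right actions of $D^{\times}$, and Tate twists on $H^1$) with the formulation above. Provided these conventions are aligned, the proof reduces to the one-line citation plus the characteristic-agnostic remark, with no further geometric input required at this stage; the deeper analysis of representations of $W_K$ appearing in $H^1_{\mathrm{LT}}$ is postponed to the later sections of the paper.
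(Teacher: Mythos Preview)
Your proposal has the right overall shape but underestimates the distance between Mieda's theorem and the proposition. Mieda's result in \cite{MieGeomJL} (Definition 6.2 and Theorem 6.6) is not stated in the form you quote: it computes an alternating sum
\[
 \sum_{i,j\geq 0} (-1)^{i+j-1}
 \Ext^j_{\mathit{GL}_2(K)/\varpi^{\mathbb{Z}}}
 \bigl(H^i_{\mathrm{LT},\varpi},\pi_0\bigr)
\]
in the Grothendieck group of $D^{\times}$-representations, where $H^i_{\mathrm{LT},\varpi}$ is the cohomology of the quotient tower $\mathrm{LT}(\mathfrak{p}^n)/\varpi^{\mathbb{Z}}$ and $\pi_0$ has trivial central character on $\varpi^{\mathbb{Z}}$. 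Two genuine reductions are therefore missing from your sketch. First, one must collapse the alternating sum to the single term $i=1$, $j=0$: this uses Mieda's non-cuspidality result \cite[Theorem 3.7]{MieNoncuspLT} to kill the contributions from $i\neq 1$ when $\pi$ is cuspidal, and the injectivity of cuspidal representations in the relevant category to kill the higher $\Ext^j$. Second, one must pass between the full tower $H^1_{\mathrm{LT}}$ and the quotient $H^1_{\mathrm{LT},\varpi}$, which requires choosing a square root $c_\pi$ of $\omega_\pi(\varpi)$, twisting $\pi$ by the resulting character $\zeta_\pi$ so that the central character becomes trivial on $\varpi$, and then untwisting on the $D^{\times}$ side by $\xi_\pi$; this is the argument from \cite[3.3]{StrJLLT}, together with the compatibility $\mathrm{LJ}(\pi\otimes\zeta_\pi^{-1})\otimes\xi_\pi=\mathrm{LJ}(\pi)$.

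Neither of these is a matter of normalization or Tate twists; they are the actual content of the paper's proof beyond the citation. Your remark on equal characteristic is correct and matches the paper's comment.
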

\begin{proof}
Let $\omega_{\pi}$ be the central character of $\pi$. 
We take $c_{\pi} \in \overline{\mathbb{Q}}_{\ell}$ such that 
$c_{\pi}^2 =\omega_{\pi} (\varpi)$. 
We define a character $\zeta_{\pi}$ of 
$\textit{GL}_2 (K)$ by 
$\zeta_{\pi} (g) =c_{\pi}^{v(\det (g))}$, and 
a character $\xi_{\pi}$ of $D^{\times}$ by 
$\xi_{\pi} (d) =c_{\pi}^{v(\Nrd_{D/K} (d))}$ for 
$d \in D^{\times}$. 
For $n \geq 0$, let 
$\mathrm{LT}(\mathfrak{p}^n) / \varpi^{\mathbb{Z}}$ 
denote the quotient of 
$\mathrm{LT}(\mathfrak{p}^n)$ under the action of 
$\varpi^{\mathbb{Z}} \subset D^{\times}$. 
We put 
\[
 H^1 _{\mathrm{LT},\varpi} = 
 \varinjlim_n H^1 _{\mathrm{c}} 
 \bigl( \left(\mathrm{LT}(\mathfrak{p}^n) /
 \varpi^{\mathbb{Z}} \right)_{\widehat{K}^{\mathrm{ac}}}, 
 \overline{\mathbb{Q}}_{\ell} \bigr) . 
\]
Then we have 
\begin{equation*}
 \Hom_{GL_2 (K)} (H^1 _{\mathrm{LT}} ,\pi) \simeq 
 \Hom_{GL_2 (K)} (H^1 _{\mathrm{LT},\varpi} , 
 \pi \otimes \zeta_{\pi} ^{-1} ) \otimes \xi_{\pi} 
\end{equation*}
as representations of $D^{\times}$ by arguments in 
\cite[3.3]{StrJLLT}. 
Hence the claim follows from 
\cite[Proposition 2.1]{ITStab3}, 
because 
$\mathrm{LJ}(\pi \otimes \zeta_{\pi} ^{-1}) \otimes \xi_{\pi} 
 = \mathrm{LJ}(\pi)$. 
\end{proof}

\section{Semi-stable reduction of $\mathbf{X}_1(\mathfrak{p}^3)$}\label{sstred}
From now on, 
we assume that 
$p=2$. 
The dual graph of a semi-stable reduction of 
$\mathbf{X}_1(\mathfrak{p}^3)$ in this case is 
the following: \\ 
\centerline{
\xygraph{
\circ ([]!{+(+.1,+.3)} {\overline{\mathbf{Y}}^{\mathrm{c}}_{1,2}})
(-[r]
\circ ([]!{+(+.1,+.3)} {\overline{\mathbf{Z}}^{\mathrm{c}}_{1,1}})
(-[r] 
 \circ ([]!{+(+.1,+.3)} {\overline{\mathbf{Y}}^{\mathrm{c}}_{2,1}}), 
 -[dl] 
 \circ([]!{+(-.35,+.1)} 
 {\overline{\mathbf{P}}^{\mathrm{c}}_{\zeta_1}})
 ( [r] 
 ([]!{+(-.57,+0)} {\cdots} ) 
 ([]!{+(-.19,+0)} {\cdots} ) 
 ([]!{+(+.19,+0)} {\cdots} )
 ([]!{+(+.57,+0)} {\cdots} ) , 
 -[dl] 
 \circ([]!{+(+.2,-.35)} 
 {\overline{\mathbf{X}}^{\mathrm{c}}_{\zeta_1,\zeta'_1}}), 
 -[d] 
 \circ 
 ([]!{+(-.5,+0)} {\cdots} )
 ([]!{+(+.4,-.35)} 
 {\overline{\mathbf{X}}^{\mathrm{c}}_{\zeta_1,\zeta'_{q-1}}}) ), 
 -[dr] 
 \circ([]!{+(+.6,+.1)} 
 {\overline{\mathbf{P}}^{\mathrm{c}}_{\zeta_{q^2 -1}}}) 
 (-[d] 
 \circ([]!{+(+.3,-.35)} 
 {\overline{\mathbf{X}}^{\mathrm{c}}_{\zeta_{q^2-1},\zeta'_1}}), 
 -[dr] 
 \circ 
 ([]!{+(-.5,+0)} {\cdots} )
 ([]!{+(+.7,-.35)} 
 {\overline{\mathbf{X}}^{\mathrm{c}}_{\zeta_{q^2-1},\zeta'_{q-1}}}) ) 
 ) 
}} 
where 
$k_2 ^{\times} = 
 \{ \zeta_1 ,\ldots ,\zeta_{q^2 -1} \}$, 
$k^{\times} = 
 \{ \zeta'_1 ,\ldots ,\zeta'_{q-1} \}$, 
$\overline{\mathbf{Y}}_{1,2}$ 
and 
$\overline{\mathbf{Y}}_{2,1}$ 
are defined by $x^q y-xy^q=1$, 
$\overline{\mathbf{Z}}^{\mathrm{c}}_{1,1}$ and 
$\overline{\mathbf{P}}^{\mathrm{c}}_{\zeta}$ 
are isomorphic to 
$\mathbb{P}^1_{k^{\mathrm{ac}}}$, and 
$\overline{\mathbf{X}}_{\zeta,\zeta'}$ 
are defined by $z^2+z=w^3$ 
(\cf \cite[Introduction]{ITStab3}). 

For a finite extension $K'$ of $K$, 
let $\mathrm{Art}_{K'} \colon  {K'}^{\times} 
\stackrel{\sim}{\to} W^{\mathrm{ab}}_{K'}$ 
be the Artin reciprocity 
map normalized so that 
the image by $\mathrm{Art}_{K'}$ 
of a uniformizer is a lift of the geometric Frobenius element. 
We define a homomorphism 
$\lvert \cdot \rvert \colon W_K \to \mathbb{Q}_{>0}$ 
by the composition 
\[
 W_K \twoheadrightarrow W^{\mathrm{ab}}_K 
 \xrightarrow{\mathrm{Art}_K^{-1}} K^{\times} 
 \xrightarrow{\lvert \cdot \rvert_K} \mathbb{Q}_{>0}. 
\]

\subsubsection*{Definition of $\mathcal{S}$, $(W_K \times D^{\times})^0$ and $r_{\sigma}$}

We put 
\[
 \mathcal{S}=k_2 ^{\times} \times k^{\times} 
\]
and 
\[
 (W_K \times D^{\times})^0 = \bigl\{ (\sigma,d) \in W_K \times D^{\times} 
 \bigm| \lvert \Nrd_{D/K} (d) \rvert_K \cdot 
 \lvert \sigma \rvert =1 \bigr\}. 
\]
Then $(W_K \times D^{\times})^0$ acts on 
$\mathbf{X}_1(\mathfrak{p}^3)_{\widehat{K}^{\mathrm{ac}}}$. 
It induces an action of $(W_K \times D^{\times})^0$ on 
$\coprod_{(\zeta,\zeta') \in \mathcal{S}} 
 \overline{\mathbf{X}}^{\mathrm{c}}_{\zeta,\zeta'}$ 
by \cite[Proposition 5.4 and Proposition 6.12]{ITStab3}. 
For $\sigma \in W_K$, let 
$r_{\sigma}$ be the integer such that 
$\lvert \sigma \rvert =q^{-r_{\sigma}}$. 
Let $\mathcal{O}_D ^{\times} \rtimes W_K$ be the 
semidirect product where 
$\sigma \in W_K$ acts on 
$\mathcal{O}_D ^{\times}$ by 
$d \mapsto \varphi^{r_{\sigma}} d \varphi^{-r_{\sigma}}$. 
Then we have the isomorphism 
\begin{equation}\label{0isom}
 \mathcal{O}_D ^{\times} \rtimes W_K \simeq 
 (W_K \times D^{\times})^0 ; \ 
 (d,\sigma) \mapsto (\sigma, d \varphi^{-r_{\sigma}} ). 
\end{equation}
By this isomorphism, 
$\mathcal{O}_D ^{\times} \rtimes W_K$ acts on 
$\coprod_{(\zeta,\zeta') \in \mathcal{S}} 
 \overline{\mathbf{X}}^{\mathrm{c}}_{\zeta,\zeta'}$. 
We will describe this action. 

\subsubsection*{Definition of $\kappa_1$, $\kappa_2$ and $f_d$}

For $d \in \mathcal{O}_D^{\times}$, we put 
\[
 \kappa_1 (d)=\bar{d}_1, \quad 
 \kappa_2 (d)=-\bar{d}_1^{-q} \bar{d}_2 , 
\] 
where 
$d=d_1 +\varphi d_2$ with 
$d_1 \in \mathcal{O}_{K_2}^{\times}$ and 
$d_2 \in \mathcal{O}_{K_2}$. 
We take $(\zeta,\zeta') \in \mathcal{S}$. 
We put 
\[
 f_d =\Tr_{k_2/\mathbb{F}_2}
 ( \zeta^{1-q}  \zeta'^{-2} \kappa_2 (d)) 
\]
for $d \in \mathcal{O}_D ^{\times}$.

\subsubsection*{Definition of $\zeta''$, $\delta$, $\theta$, $\zeta_{3,\sigma}$, $\nu_{\sigma}$ and $\mu_{\sigma}$}

We briefly recall the definition of 
$\zeta_{3,\sigma}$, $\nu_{\sigma}$ and 
$\mu_{\sigma}$ for $\sigma \in W_K$ 
from \cite[Section 6.2.2]{ITStab3}. 
Consult there for detailed discussions. 
We choose $\zeta'' \in \mu_{3(q-1)} (K^{\mathrm{ur}})$ 
such that 
\begin{equation}\label{eq:zeta''}
 \zeta''^3=\hat{\zeta}'^4. 
\end{equation}
We take 
$\delta \in K^{\mathrm{ac}}$ such that 
$\delta^4 -\delta=1/(\zeta''\varpi^{1/3})$ and 
$\delta \varpi^{1/12} \equiv \hat{\zeta}' \zeta''^{-1} 
 \pmod{0}$. 
We take $\theta \in K^{\mathrm{ac}}$ such that 
$\theta^2 -\theta =\delta^3$. 
Note that 
$v(\delta) =-1/12$, 
$v(\theta) =-1/8$ 
and 
$\delta \in K(\zeta'' \varpi^{1/3} ,\theta)$. 

Let $\sigma \in W_K$ in this paragraph. 
We put 
\[
 \zeta_{3,\sigma} =
 \frac{\sigma(\zeta''\varpi^{\frac{1}{3}})}{\zeta''\varpi^{\frac{1}{3}}} 
 \in \mu_3 (K^{\mathrm{ur}}). 
\]
We take $\nu_{\sigma} \in \mu_3(K^{\mathrm{ur}})\cup\{0\}$ such that 
$\sigma(\delta) \equiv 
 \zeta_{3,\sigma} ^{-1} (\delta +\nu_{\sigma}) \pmod{5/6}$. 
We choose $\zeta_3 \in \mu_3(K^{\mathrm{ur}})$ 
such that $\zeta_3 \neq 1$. 
Then, we can take 
$\mu_{\sigma} \in \mu_3(K^{\mathrm{ur}})\cup\{0\}$ such that 
\[
 \mu_{\sigma} \equiv 
 \sigma (\theta)-\theta +\nu_{\sigma}^2 \delta +\nu_{\sigma}^3 
 +\sigma(\zeta_3) -\zeta_3 \pmod{0+}. 
\]

\subsubsection*{Definition of $\lambda_{\sigma}$, $\lambda$ and $Q \rtimes \mathbb{Z}$}

We put 
\[
 \lambda_{\sigma} =
 \frac{\sigma(\varpi^{\frac{1}{2(q-1)}} )}{\varpi^{\frac{1}{2(q-1)}}} 
 \in \mu_{2(q-1)}(K^{\mathrm{ac}}) 
\] 
for $\sigma \in W_K$. 
We define a character 
$\lambda \colon W_K \to k^{\times}$ by 
$\lambda (\sigma)=\bar{\lambda}_{\sigma}$. 
We put 
\[ 
 Q=
 \Biggl\{
 g(\alpha, \beta, \gamma)=
 \begin{pmatrix}
 \alpha & \beta &  \gamma \\
 & \alpha^2 &  \beta^2\\
 & & \alpha
 \end{pmatrix}
 \in \textit{GL}_3 (\mathbb{F}_4) \ \Bigg| \ 
 \alpha\gamma^2
 +\alpha^2 \gamma=\beta^3 
 \Biggr\}.
\]
We note that $|Q|=24$ (\cf \cite[p.~137]{ITStab3}). 
Let $Q \rtimes \mathbb{Z}$ be the semidirect product 
where $r \in \mathbb{Z}$ acts on $Q$ by 
$g(\alpha, \beta, \gamma) \mapsto 
 g(\alpha^{q^r} , \beta^{q^r} , \gamma^{q^r} )$. 
To clarify the dependence on $q$, 
we sometimes write 
$Q \rtimes_{(q)} \mathbb{Z}$ for 
$Q \rtimes \mathbb{Z}$. 
Let $k_2 ^{\times} \rtimes \Gal(k_2 /k)$ be the semidirect product 
with the natural action of $\Gal(k_2 /k)$ on 
$k_2 ^{\times}$. 
We consider $\mathbb{F}_4$ as a subfield of 
$k_2 \subset k^{\mathrm{ac}}$. 
Let $\Fr_q$ be the $q$-th power 
Frobenius map on $k^{\mathrm{ac}}$. 
Then 
$(Q \rtimes \mathbb{Z}) \times (k_2 ^{\times} \rtimes \Gal(k_2 /k))$ 
acts on 
$\coprod_{(\zeta,\zeta') \in \mathcal{S}} 
 \overline{\mathbf{X}}^{\mathrm{c}}_{\zeta,\zeta'}$ as a scheme over $k$ 
as follows: 
An element 
\[
 \left( \left( g(\alpha, \beta, \gamma),r \right), 
 (a,\Fr_q ^b) \right) \in 
 (Q \rtimes \mathbb{Z}) \times \left( 
 k_2 ^{\times} \rtimes \Gal(k_2 /k) \right) 
\]
acts by the isomorphism 
\[
 \overline{\mathbf{X}}_{\zeta, \zeta'} \to 
 \overline{\mathbf{X}}_{a \zeta^{q^b}, \zeta'}; \ 
 (z,w) \mapsto 
 \bigl(z^{q^{-r}} +\alpha^{-1} \beta w^{q^{-r}} +\alpha^{-1} \gamma, 
 \alpha(w^{q^{-r}} +(\alpha^{-1} \beta )^2) \bigr), 
\]
where we describe a bijection on 
$k^{\mathrm{ac}}$-valued points. 
Note that 
the action of 
$(g(1, 0, 0),r) \in Q \rtimes \mathbb{Z}$ 
is induced by the action of 
$\Fr_q^r$ on the coefficients of 
$k^{\mathrm{ac}}[z,w]/(z^2+z-w^3)$.

\begin{prop}\label{descact}
The action of $\mathcal{O}_D ^{\times} \rtimes W_K$ on 
$\coprod_{(\zeta,\zeta') \in \mathcal{S}} 
 \overline{\mathbf{X}}^{\mathrm{c}}_{\zeta,\zeta'}$ 
is described as follows: 
An element $(d,1) \in \mathcal{O}_D ^{\times} \rtimes W_K$ 
induces the isomorphism 
\[
 \overline{\mathbf{X}}_{\zeta, \zeta'} \to 
 \overline{\mathbf{X}}_{\kappa_1(d) \zeta, \zeta'}; \ 
 (z,w) \mapsto 
 (z+f_d ,w ).  
\]
For $\zeta' \in k^{\times}$, 
the action of 
$W_K \subset \mathcal{O}_D ^{\times} \rtimes W_K$ 
on 
$\coprod_{\zeta \in k_2^{\times}} 
 \overline{\mathbf{X}}^{\mathrm{c}}_{\zeta,\zeta'}$ 
factors through 
\begin{align*}
 \Xi_{\zeta'} \colon W_K &\to 
 (Q \rtimes \mathbb{Z}) \times (k_2 ^{\times} \rtimes \Gal(k_2 /k)) ;\\ 
 \sigma &\mapsto 
 \Bigl( \bigl( 
 g(\bar{\zeta}_{3,\sigma}, \bar{\zeta}_{3,\sigma}^2 
 \bar{\nu}_{\sigma}^2, 
 \bar{\zeta}_{3,\sigma} \bar{\mu}_{\sigma}), r_{\sigma} \bigr), 
 (\bar{\lambda}_{\sigma} ,\Fr_q ^{-r_{\sigma}} ) \Bigr). 
\end{align*}
\end{prop}
\begin{proof}
This follows from \cite[Proposition 5.4 and Proposition 6.12]{ITStab3}. 
\end{proof}

\subsubsection*{Definition of $\Theta_{\zeta'}$}

Let 
$\Theta_{\zeta'} \colon W_K \to 
 Q \rtimes \mathbb{Z}$ 
be the composite of $\Xi_{\zeta'}$ with 
the projection to $Q \rtimes \mathbb{Z}$. 
By \cite[Proposition 6.13]{ITStab3}, 
the map $\Theta_{\zeta'}$ gives an isomorphism 
$W(K^{\mathrm{ur}} (\varpi^{1/3}, \theta)/K) \simeq Q \rtimes \mathbb{Z}$ 
and a finite extension of $K$ inside 
$K^{\mathrm{ur}} (\varpi^{1/3}, \theta)$ 
corresponds to 
a finite index subgroup of $Q \rtimes \mathbb{Z}$. 

\section{Cohomology of elliptic curve}\label{cohell}
Let $\ell$ be an odd prime number. 
We fix an algebraic closure 
$\overline{\mathbb{Q}}_{\ell}$ of 
$\mathbb{Q}_{\ell}$. 
In the sequel, 
we consider representations of groups 
over $\overline{\mathbb{Q}}_{\ell}$. 

\subsubsection*{Definition of $Q_8$, $C_4$, $Z$, $\phi$, $\tau$ and $C_3$}

We put 
\[
 Q_8 =\{ g(1,\beta,\gamma) \in Q \}, 
\]
which is a normal subgroup of $Q$ of order $8$. 
Let $C_4 \subset Q_8$ be the cyclic subgroup of order $4$ 
generated by $g(1,1,\gamma)$ for 
$\gamma \in \mathbb{F}^{\times}_{4} \backslash \{1\}$. 
Let $Z \subset C_4$ 
be the subgroup consisting of 
$g(1,0,\gamma)$ with $\gamma^2+\gamma=0$, 
which is the center of $Q$. 
We take a faithful character 
$\phi$ of $C_4$. 
By \cite[22.2 Lemma]{BHLLCGL2}, 
there exists 
a unique irreducible 
two-dimensional representation 
$\tau$ of $Q$ 
such that 
\begin{equation}\label{qq}
 \tau |_{Z} \simeq (\phi |_{Z} )^{\oplus 2}, 
 \quad 
 \tr (g(\alpha,0,0) ; \tau ) =-1 
\end{equation}
for 
$\alpha \in \mathbb{F}^{\times}_{4} \backslash \{1\}$. 
Let $C_3 \subset Q$ be the cyclic subgroup of order $3$ 
consisting of 
$g(\alpha,0,0)$ with 
$\alpha \in \mathbb{F}^{\times}_{4}$. 
Then we have 
\begin{equation}\label{tau}
 \tau =\Ind_{C_4}^Q \phi - 
 \Ind_{Z \times C_3}^Q (\phi |_Z \otimes 1_{C_3} ) 
\end{equation}
by \cite[16.4 Lemma 2.(4)]{BHLLCGL2} and a proof of 
\cite[22.2 Lemma]{BHLLCGL2}. 

\subsubsection*{Definition of $\mathcal{E}$, $\tau_q$ and $f$}

Let $\mathcal{E}$ be the elliptic curve over $\mathbb{F}_2$ 
defined by $z^2+z=w^3$. 
Then 
$(g(\alpha, \beta, \gamma),r) \in Q \rtimes \mathbb{Z}$ 
acts on $\mathcal{E}_{k^{\mathrm{ac}}}$ by 
\[
 (z,w) \mapsto 
 \bigl(z^{q^{-r}} +\alpha^{-1} \beta w^{q^{-r}} +\alpha^{-1} \gamma, 
 \alpha(w^{q^{-r}} +(\alpha^{-1} \beta )^2)\bigr). 
\]
The action of $Q \rtimes \mathbb{Z}$ 
gives a representation 
$H^1(\mathcal{E}_{k^{\mathrm{ac}}},\overline{\mathbb{Q}}_{\ell})$ 
of $Q \rtimes \mathbb{Z}$ by the pullback by the inverse. 
For a representation $V$ of 
$Q \rtimes \mathbb{Z}$ and an integer $m$, 
we write $V(m)$ for 
the twist of $V$ by 
the character 
$Q \rtimes \mathbb{Z} \ni (g,n) \mapsto q^{-mn}$. 

We write $\tau_q$ 
for the representation 
$H^1(\mathcal{E}_{k^{\mathrm{ac}}},\overline{\mathbb{Q}}_{\ell})(1)$ 
of $Q \rtimes \mathbb{Z}$. 
Let $f$ be the degree of the extension $k$ over $\mathbb{F}_2$. 

\subsubsection*{Definition of $\eta_2$, $C$ and $\phi'$}

We are going to define 
a subgroup $C \subset Q_8 \rtimes \mathbb{Z}$ 
and a character $\phi'$ of $C$. 
To clarify the dependence on $q$, 
we sometimes write 
$C_{(q)}$ and $\phi'_{(q)}$ for 
$C$ and $\phi'$ respectively. 

First, we consider the case where $f=1$. 
Let 
\[
 C_{(2)} \subset Q_8 \rtimes_{(2)} \mathbb{Z} 
\]
be 
the subgroup which consists of 
$(g(1,\beta,\gamma),n)$ satisfying 
$g(1,\beta,\gamma) \in C_4$ if $n$ is even, and 
$g(1,\beta,\gamma) \notin C_4$ if $n$ is odd. 
We note that the index of $C_{(2)}$ in 
$Q_8 \rtimes_{(2)} \mathbb{Z}$ is two. 
We take 
$\eta_2 \in \overline{\mathbb{Q}}_{\ell}$ 
such that $\eta_2^2 -2\eta_2 +2 =0$ 
and $\phi (g(1,1,\bar{\zeta}_3))=-\eta_2^2/2$. 
We note that $\eta_2^4 =-4$. 
We define a character 
\[
 \phi'_{(2)} \colon C_{(2)} \to \overline{\mathbb{Q}}_{\ell}^{\times} 
\]
by sending 
$(g(1,\bar{\zeta}_3,\bar{\zeta}_3),1)$ to $\eta_2 /2$ 
and 
$(g(1,0,0),2)$ to $(-1)/2$. 
We note that 
$(g(1,\bar{\zeta}_3,\bar{\zeta}_3),1)$ 
and 
$(g(1,0,0),2)$ 
generates $C_{(2)}$ as a group. 

In general, 
let $C_{(q)}$ be the inverse image of 
$C_{(2)}$ under the group homomorphism 
\[
 Q_8 \rtimes_{(q)} \mathbb{Z} \to Q_8 \rtimes_{(2)} \mathbb{Z}; \ 
 (g,n) \mapsto (g,fn). 
\]
Let $\phi'_{(q)}$ be the character of $C_{(q)}$ 
induced by $\phi'_{(2)}$ and the homomorphism 
$C_{(q)} \to C_{(2)}$. 
We have $\phi'_{(q)}|_{C_4} =\phi$ 
by the construction. 
We note that $C_{(q)}=C_4 \times \mathbb{Z}$ 
and 
$\phi'_{(q)} (g,n) =\phi (g) (-2)^{(-fn)/2}$, 
if $f$ is even. 

\begin{lem}\label{quadind}
We have an isomorphism 
$\tau_q |_{Q_8 \rtimes \mathbb{Z}} \simeq 
 \Ind _C ^{Q_8 \rtimes \mathbb{Z}} \phi'$. 
\end{lem}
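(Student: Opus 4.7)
The plan is to compute $\dim \Hom_{Q_8 \rtimes \mathbb{Z}}\bigl(\Ind_C^{Q_8 \rtimes \mathbb{Z}} \phi_1,\, \tau_q|_{Q_8 \rtimes \mathbb{Z}}\bigr)$ via Frobenius reciprocity, and to deduce the desired isomorphism from a dimension count once the irreducibility of $\Ind_C^{Q_8 \rtimes \mathbb{Z}} \phi_1$ is in hand.

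Write $G = Q \rtimes \mathbb{Z}$ and $H = Q_8 \rtimes \mathbb{Z}$. Since the $\phi_0^{-1}$-twists of the representations in play factor through a finite quotient of $G$, they are semisimple; combined with the irreducibility of $\tau_q$ and of $\Ind_{C_6 \rtimes \mathbb{Z}}^{G} \phi_2$ (Lemma~\ref{Psisurj}), the defining exact sequence splits as
\[
 \Ind_C^{G} \phi_1 \simeq \tau_q \oplus \Ind_{C_6 \rtimes \mathbb{Z}}^{G} \phi_2,
\]
with non-isomorphic summands (of dimensions $2$ and $4$). Hence $\dim \End_G\bigl(\Ind_C^G \phi_1\bigr) = 2$, and $\dim \Hom_G\bigl(\Ind_C^G \phi_1,\, \Ind_{C_6 \rtimes \mathbb{Z}}^G \phi_2\bigr) = 1$ by Lemma~\ref{Psisurj} itself. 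By transitivity, $\Ind_C^G \phi_1 = \Ind_H^G \Ind_C^H \phi_1$, so Frobenius reciprocity gives for any $G$-representation $W$
\[
 \Hom_H\bigl(\Ind_C^H \phi_1,\, \Res^G_H W\bigr) \simeq \Hom_G\bigl(\Ind_C^G \phi_1,\, W\bigr).
\]
Applying this with $W = \Ind_C^G \phi_1$ and with $W = \Ind_{C_6 \rtimes \mathbb{Z}}^G \phi_2$, and using the exact sequence together with semisimplicity, one obtains
\[
 \dim \Hom_H\bigl(\Ind_C^H \phi_1,\, \tau_q|_H\bigr) = 2 - 1 = 1.
\]

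It remains to verify that $\Ind_C^H \phi_1$ is irreducible. Since $[H:C] = 2$ and $(e,1) \in H$ normalizes $C$, Mackey's irreducibility criterion reduces to checking $\phi_1 \neq \phi_1^{(e,1)}$ on $C$. Conjugation by $(e,1)$ sends $(g(1, \bar{\zeta}_3, \bar{\zeta}_3), 1)$ to $(g(1, \bar{\zeta}_3^2, \bar{\zeta}_3^2), 1)$; writing the latter as $(g(1, \bar{\zeta}_3, \bar{\zeta}_3), 1)^3 \cdot (g(1, 0, 0), 2)^{-1}$ in the given generators of $C$ and evaluating yields $\phi_1((g(1, \bar{\zeta}_3^2, \bar{\zeta}_3^2), 1)) = -\eta^3 q^{-2}$, whereas $\phi_1((g(1, \bar{\zeta}_3, \bar{\zeta}_3), 1)) = \eta q^{-1}$. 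Equality would force $\eta^2 = -q$, contradicting $\eta^2 + (-2)^{(f+1)/2} \eta + q = 0$ together with $\eta \neq 0$. With irreducibility established, a nonzero $H$-map $\Ind_C^H \phi_1 \to \tau_q|_H$ is automatically injective, and since both sides are $2$-dimensional, it is an isomorphism. The only step of substance beyond formal manipulation is the word identification in $C$ used to evaluate $\phi_1^{(e,1)}$, which I expect to be the main obstacle.
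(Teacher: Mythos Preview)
Your proof is correct and follows essentially the same line as the paper's: both compute $\dim\Hom_{Q_8\rtimes\mathbb{Z}}(\Ind_C^{Q_8\rtimes\mathbb{Z}}\phi_1,\tau_q|_{Q_8\rtimes\mathbb{Z}})=2-1=1$ via Frobenius reciprocity and the decomposition $\Ind_C^{Q\rtimes\mathbb{Z}}\phi_1\simeq\tau_q\oplus\Ind_{C_6\rtimes\mathbb{Z}}^{Q\rtimes\mathbb{Z}}\phi_2$, then conclude from irreducibility of $\Ind_C^{Q_8\rtimes\mathbb{Z}}\phi_1$. The only difference is in that last step: you verify irreducibility by Mackey's criterion and an explicit evaluation of $\phi_1^{(e,1)}$, whereas the paper argues by dimension---since $\Ind_{Q_8\rtimes\mathbb{Z}}^{Q\rtimes\mathbb{Z}}$ triples dimensions, a decomposition of the $2$-dimensional $\Ind_C^{Q_8\rtimes\mathbb{Z}}\phi_1$ into two characters would force $\Ind_C^{Q\rtimes\mathbb{Z}}\phi_1$ to split as $3+3$, contradicting the $2+4$ decomposition already established. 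The paper's route avoids the group-law computation you flagged as the main obstacle; your route is more self-contained and does not rely on dimension bookkeeping. Both are short and correct.
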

\begin{proof}
It suffices to consider the case where $f=1$, 
because  
the claimed isomorphisms for general cases 
are induced from 
the isomorphism for this case by 
the group homomorphism 
\[
 Q_8 \rtimes_{(q)} \mathbb{Z} \to Q_8 \rtimes_{(2)} \mathbb{Z}; \ 
 (g,n) \mapsto (g,fn). 
\]

We assume that $f=1$. 
We know that 
$\tau_2 |_{Q_8} \simeq \Ind_{C_4} ^{Q_8} \phi$,  
and these representations are irreducible. 
Hence the $\overline{\mathbb{Q}}_{\ell}$-vector space 
\[
 \Hom_{Q_8} \left( \tau_2 |_{Q_8}, 
 (\Ind _C ^{Q_8 \rtimes \mathbb{Z}} \phi') |_{Q_8} \right) 
\]
is $1$-dimensional, and  
$Q_8 \rtimes \mathbb{Z}$ acts 
on the $1$-dimensional subspace 
by a character $\chi$, 
which factors through 
the projection $Q_8 \rtimes \mathbb{Z} \to \mathbb{Z}$. 
Then we have 
\[
 \tau_2 |_{Q_8 \rtimes \mathbb{Z}} \simeq 
 (\Ind _C ^{Q_8 \rtimes \mathbb{Z}} \phi') \otimes \chi.
\]
Therefore, it suffices to show that 
\begin{equation}\label{trtau2Ind}
 \tr \bigl( (g(1,\bar{\zeta}_3,\bar{\zeta}_3), 1) ;\tau_2 \bigr) 
 = 
 \tr \bigl( (g(1,\bar{\zeta}_3,\bar{\zeta}_3), 1) ; 
 \Ind _C ^{Q_8 \rtimes \mathbb{Z}} \phi' \bigr) 
 \neq 0, 
\end{equation}
since \eqref{trtau2Ind} implies that $\chi$ is trivial. 
We put 
\[
 \phi''\bigl( (g,n) \bigr)= 
 \phi' \bigl( (g(1,0,0), 1) (g, n) (g(1,0,0), -1) \bigr)
\]
for $(g,n) \in Q_8 \rtimes \mathbb{Z}$. 
Then we have 
\begin{equation}\label{trInd1}
\begin{split}
 \tr \bigl( (g(1,\bar{\zeta}_3,\bar{\zeta}_3), 1) ; 
 \Ind _C ^{Q_8 \rtimes \mathbb{Z}} \phi' \bigr) 
 &= 
 \phi' \bigl( (g(1,\bar{\zeta}_3,\bar{\zeta}_3), 1) \bigr) + 
 \phi'' \bigl( 
 (g(1,\bar{\zeta}_3,\bar{\zeta}_3), 1) \bigr) \\ 
 &=
 \frac{\eta_2}{2} -\frac{\eta_2^3}{4} =1 , 
\end{split}
\end{equation}
where we use 
\[
 (g(1,0,0), 1) (g(1,\bar{\zeta}_3,\bar{\zeta}_3), 1) (g(1,0,0), -1) = 
 (g(1,\bar{\zeta}_3,\bar{\zeta}_3), 1)^3 (g(1,0,0), -2) 
\]
at the second equality. 

Let $\Fr_{2,\mathcal{E}}$ be the absolute $2$-th power 
Frobenius endomorphism of $\mathcal{E}_{k^{\mathrm{ac}}}$. 
By the Lefschetz trace formula, we have 
\begin{align*}\label{LTFe}
 2+1 -&\tr \bigl( (g(1,\bar{\zeta}_3 ,\bar{\zeta}_3), 1); 
 H^1(\mathcal{E}_{k^{\mathrm{ac}}},\overline{\mathbb{Q}}_{\ell}) \bigr) = 
 \bigl\lvert \{ P \in \mathcal{E}(k^{\mathrm{ac}}) \mid 
 \bigl((g(1,\bar{\zeta}_3,\bar{\zeta}_3),1)^{-1} \circ \Fr_{2,\mathcal{E}} \bigr) 
 P=P \} \bigr\rvert \\ 
 &= \bigl\lvert \{ (z,w) \in k^{\mathrm{ac}} \times k^{\mathrm{ac}} \mid 
 z^2+z=w^3,\ z=z^2+\bar{\zeta}_3^2 w^2 +\bar{\zeta}_3,\ 
 w=w^2+\bar{\zeta}_3 \} \bigr\rvert +1 \\ 
 &= \bigl\lvert \{ (z,w) \in k^{\mathrm{ac}} \times k^{\mathrm{ac}} \mid 
 z^2+z=w^3,\ w^3 +\bar{\zeta}_3^2 w^2 +\bar{\zeta}_3 =0,\ 
 w^2+w+\bar{\zeta}_3 =0 \} \bigr\rvert +1 
 =1. 
\end{align*}
Hence, we have 
\begin{equation}\label{trtau21}
 \tr \bigl( (g(1,\bar{\zeta}_3,\bar{\zeta}_3), 1) ;\tau_2 \bigr) =1. 
\end{equation}
The claim \eqref{trtau2Ind} follows from 
\eqref{trInd1} and \eqref{trtau21}. 
\end{proof}

For any positive integer $m$, 
let 
\[
 \mathrm{fr}_{2^m} \colon \mathcal{E}_{k^{\mathrm{ac}}} 
 \to \mathcal{E}_{k^{\mathrm{ac}}} 
\] 
be the base change to $k^{\mathrm{ac}}$ 
of the $2^m$-th power 
absolute Frobenius endomorphism of 
$\mathcal{E}$. 

\begin{lem}\label{Trtau2}
We assume that $f=1$. 
Then we have 
\[
 \tr \bigl( (g(1,0,0),n);\tau_2 \bigr) =
 \begin{cases}
 0 & \textrm{if $n=1$,} \\ 
 -1 & \textrm{if $n=2$} 
 \end{cases}
\] 
for 
$(g(1,0,0),n) \in Q \rtimes \mathbb{Z}$. 
\end{lem}
\begin{proof}
We have 
\[
 \tr \bigl( (g(1,0,0),n); 
 H^1(\mathcal{E}_{k^{\mathrm{ac}}},\overline{\mathbb{Q}}_{\ell}) \bigr) =
 \tr \bigl( \mathrm{fr}_{2^n}^* ; 
 H^1(\mathcal{E}_{k^{\mathrm{ac}}},\overline{\mathbb{Q}}_{\ell}) \bigr) =
 \begin{cases}
 0 & \textrm{if $n=1$,} \\ 
 -4 & \textrm{if $n=2$}, 
 \end{cases}
\] 
where the last equality follows from 
$\lvert \mathcal{E}(\mathbb{F}_2) \rvert =3$, 
$\lvert \mathcal{E}(\mathbb{F}_4) \rvert =9$ 
and the Lefschetz trace formula. 
The claim follows from this. 
\end{proof}

\begin{lem}\label{dettau}
We have 
$\det \bigl( (g,n);\tau_q \bigr) =q^{-n}$ for 
$(g,n) \in Q \rtimes \mathbb{Z}$. 
\end{lem}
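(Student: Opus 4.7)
The plan is to observe that, since $Q \trianglelefteq Q \rtimes \mathbb{Z}$ with quotient $\mathbb{Z}$, any one-dimensional character of $Q \rtimes \mathbb{Z}$ that vanishes on $Q$ automatically factors through the quotient $\mathbb{Z}$. I would first show that $\det \tau_q|_Q = 1$, reducing the claim to determining $\det \tau_q$ on the single element $(g(1,0,0),1)$. Because $\tau_q|_{Q \times 2\mathbb{Z}} \simeq \tau'_q$ by construction and $\tau'_q|_Q \simeq \tau$, we have $\det \tau_q|_Q = \det \tau$. The abelianization of $Q$ is isomorphic to $C_3$ (since $Q/Q_8 \simeq C_3$ is abelian while $Q$ itself is not), and the image of $g(\alpha,0,0)$ for $\alpha \in \mathbb{F}_4^\times \setminus \{1\}$ is a generator. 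By \eqref{qq} this element has order $3$ and trace $-1$, so its eigenvalues are the two primitive cube roots of unity whose product is $1$; hence $\det \tau = 1$, and the problem reduces to computing $\det \tau_q((g(1,0,0),1))$.

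For the $f$ even case, the defining prescription for $\tau_q$ makes $(g(1,0,0),1)$ act by the scalar $(-2)^{-f/2}$, so its determinant is $(-2)^{-f} = q^{-1}$ since $(-2)^f = 2^f = q$ when $f$ is even. For the $f$ odd case, I would apply Lemma \ref{quadind} to identify $\tau_q|_{Q_8 \rtimes \mathbb{Z}}$ with $\Ind_C^{Q_8 \rtimes \mathbb{Z}} \phi_1$. The subgroup $C$ has index two, and the element $h = (g(1,0,0),1)$ does not lie in $C$ because $g(1,0,0) \in C_4$ but $n=1$ is odd, while $h^2 = (g(1,0,0),2) \in C$. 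The standard formula $\det \Ind_C^{Q_8 \rtimes \mathbb{Z}} \phi_1(h) = -\phi_1(h^2)$ for an index-two induction evaluated outside the subgroup then gives $\det \tau_q(h) = -\phi_1((g(1,0,0),2)) = -(-q^{-1}) = q^{-1}$ by the defining value of $\phi_1$.

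I do not anticipate a substantive obstacle; the only care needed is the combinatorial membership check in $C$ for the odd $f$ case and the recollection of the standard index-two determinant formula for induced representations, after which $\det \tau_q(g,n)$ is forced to equal $q^{-n}$ throughout $Q \rtimes \mathbb{Z}$.
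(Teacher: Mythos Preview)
Your proof is correct. The overall strategy matches the paper's: establish $\det\tau=1$ so that $\det\tau_q$ factors through the quotient $\mathbb{Z}$, then evaluate at a single element with $n=1$. The details differ in two places. For $\det\tau=1$, the paper invokes self-duality of $\tau$ (deduced from \eqref{tau}) to force the induced character of $Q/Q_8\cong C_3$ to be trivial, whereas you read off the eigenvalues at $g(\alpha,0,0)$ directly from \eqref{qq}; both are immediate. For the odd-$f$ case, the paper evaluates at $(g(1,\bar\zeta_3,\bar\zeta_3),1)$ using the description of $\tau_q$ as $\Ker\Psi$ together with the Mackey decompositions \eqref{RIphi1} and \eqref{RIphi2} from the proof of Lemma~\ref{chartau}, while you evaluate at $(g(1,0,0),1)$ via Lemma~\ref{quadind} and the index-two determinant formula $\det\bigl(\Ind_C^{Q_8\rtimes\mathbb{Z}}\phi_1\bigr)(h)=-\phi_1(h^2)$ for $h\notin C$. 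Your route is the more direct one here: it bypasses the restriction to $C'$ and the cancellation of the four-dimensional induced pieces $\Ind_{C''}^{C'}(\phi_1|_{C''})$ and $\Ind_{C''}^{C'}(\phi_2|_{C''})$ that the paper's computation implicitly relies on.
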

\begin{proof}
We have an isomorphism 
$\tau_q |_Q \simeq \tau$ 
as $Q$-representations 
by \cite[Lemma 7.7]{ITStab3}. 

First we are going to show that 
$\det \tau =1$. 
We see that 
$\det \tau$ factors through $Q/Q_8$, 
because $Q/Q_8$ is the maximal abelian quotient of $Q$. 
By \eqref{tau}, we know that 
$\tau$ is self-dual. 
Hence, the character of $Q/Q_8$ 
induced from $\det \tau$ is trivial, 
since $\lvert Q/Q_8 \rvert =3$. 
Therefore, we have 
$\det \tau =1$. 

Since $\tau_q$ is induced from 
$\tau_2$ by 
the group homomorphism 
\[
 Q \rtimes_{(q)} \mathbb{Z} \to Q \rtimes_{(2)} \mathbb{Z}; \ 
 (g,n) \mapsto (g,fn), 
\]
it suffices to 
show the claim 
in the case where $f=1$. 

We assume that $f=1$. 
Let $\omega_1$ and $\omega_2$ be the non-trivial characters 
of $C_3$. 
Then, we have a direct decomposition 
\[
 \tau_2|_{C_3} \cong \omega_1 \oplus \omega_2 
\]
by \eqref{qq}. 
We fix a basis in the above decomposition. 
Then the action of 
$(g(1,0,0),1) \in Q \rtimes \mathbb{Z}$ 
can be written as 
\[
 \begin{pmatrix}
 0 & a \\ 
 b & 0 
 \end{pmatrix} 
\]
for some $a, b \in \overline{\mathbb{Q}}_{\ell}^{\times}$, 
since we have 
$(g(1,0,0),1) c = c^2 (g(1,0,0),1)$ in $Q \rtimes \mathbb{Z}$ 
for $c \in C_3$. 
By Lemma \ref{Trtau2}, 
we have $2ab=-1$. 
Hence, we have 
\[
 \det \bigl( (g(1,0,0),1);\tau_2 \bigr) =-ab=2^{-1}. 
\]
The claim follows from this and $\det \tau =1$. 
\end{proof}

\subsubsection*{Definition of $\tau_{\zeta'}$}

Let $\tau_{\zeta'}$ be the 
representation of $W_K$ induced from 
the $(Q \rtimes \mathbb{Z})$-representation 
$\tau_q$ by $\Theta_{\zeta'}$. 
We say that 
a continuous two-dimensional representation $V$ of $W_K$ 
over $\overline{\mathbb{Q}}_{\ell}$ is primitive, 
if there is no pair of a quadratic extension $K'$ and 
a continuous character $\chi$ of $W_{K'}$ such that 
$V \simeq \Ind_{W_{K'}}^{W_K} \chi$. 
The representation 
$\tau_{\zeta'}$ is primitive of conductor $3$ 
by \cite[Lemma 7.8]{ITStab3}.

\section{Realization of correspondence}\label{realcor}

\subsubsection*{Definition of $\chi_2$, $L$ and $\psi_K$}

Let 
$\chi_2 \colon \mathbb{F}_2 \to \overline{\mathbb{Q}}_{\ell}^{\times}$ 
be the non-trivial character. 
We put 
\begin{equation}\label{JPdef}
 \mathfrak{J} =\biggl\{ 
 \begin{pmatrix}
 a & b \\
 c & d
 \end{pmatrix} 
 \in M_2 (\mathcal{O}_K ) \ \bigg| \ 
 c \equiv 0 \bmod \mathfrak{p} \biggr\}, \quad 
 \mathfrak{P} =\biggl\{ 
 \begin{pmatrix}
 a & b \\
 c & d
 \end{pmatrix} 
 \in \mathfrak{J} \ \bigg| \ 
 a \equiv d \equiv 0 \bmod \mathfrak{p} \biggr\}, 
\end{equation}
and 
$U_{\mathfrak{J}}^1 =1+ \mathfrak{P} \subset M_2 (\mathcal{O}_K )$. 
We put $L=K(\varphi) \subset D$, and consider 
$L$ as a $K$-subalgebra of $M_2 (K)$ by the embedding 
\begin{equation}\label{embL}
 L \hookrightarrow M_2 (K) ; \ \varphi \mapsto 
 \begin{pmatrix}
 0 & 1 \\
 \varpi & 0
 \end{pmatrix}. 
\end{equation}
We put $U_D ^1 =1+\varphi \mathcal{O}_D$. 
We take an additive character 
$\psi_K \colon K \to \overline{\mathbb{Q}}_{\ell}^{\times}$ 
such that 
$\psi_K (a) =(\chi_2 \circ 
 \Tr_{k/\mathbb{F}_2} )(\bar{a})$ 
for $a \in \mathcal{O}_K$. 

For a finite abelian group $A$, 
the character group 
$\Hom_{\mathbb{Z}} (A,\overline{\mathbb{Q}}_{\ell} ^{\times})$ 
is denoted by $A^{\vee}$. 

\subsubsection*{Definition of $\Lambda_{\zeta',\chi,c}$ and $\pi_{\zeta',\chi,c}$}

Let $\zeta' \in k^{\times}$, 
$\chi \in (k^{\times})^{\vee}$ and 
$c \in \overline{\mathbb{Q}}_{\ell}^{\times}$. 
We define a character 
$\Lambda_{\zeta',\chi,c} \colon L^{\times} U_{\mathfrak{J}}^1 
 \to \overline{\mathbb{Q}}_{\ell}^{\times}$ by 
\begin{align*}
 & \Lambda_{\zeta',\chi,c} (\varphi)=-c,\\ 
 & \Lambda_{\zeta',\chi,c} (a) =\chi (\bar{a}) \ 
 \textrm{for} \ 
 a \in \mathcal{O}_L ^{\times}, \\ 
 & \Lambda_{\zeta',\chi,c} (x)=(\psi_K \circ 
 \tr )\left( \hat{\zeta}'^{-2} \varphi^{-1}(x-1) \right) 
 \ 
 \textrm{for} \ 
 x \in U_{\mathfrak{J}}^1. 
\end{align*}
We put 
\[
 \pi_{\zeta',\chi,c} = 
 \mathrm{c\mathchar`-Ind}_{L^{\times} U_{\mathfrak{J}}^1}^{\textit{GL}_2(K)} 
 \Lambda_{\zeta',\chi,c}. 
\]
By \cite[Proposition 1.3]{ITsimpJL}, 
$\pi_{\zeta',\chi,c}$ is a supercuspidal representation of 
conductor $3$, 
and 
any supercuspidal representation of conductor $3$ 
is isomorphic to 
$\pi_{\zeta',\chi,c}$ for some 
$\zeta' \in k^{\times}$, 
$\chi \in (k^{\times})^{\vee}$ and 
$c \in \overline{\mathbb{Q}}_{\ell}^{\times}$. 

\subsubsection*{Definition of $\theta_{\zeta',\chi,c}$ and $\rho_{\zeta', \chi,c}$}

Next, we define a character 
$\theta_{\zeta',\chi,c} \colon L^{\times} U_D ^1 
 \to \overline{\mathbb{Q}}_{\ell}^{\times}$ by 
\begin{align*}
 &\theta_{\zeta',\chi,c} (\varphi)=c, \\ 
 &\theta_{\zeta',\chi,c} (a) =\chi (\bar{a}) \ 
 \textrm{for} \ 
 a \in \mathcal{O}_L ^{\times}, \\ 
 & \theta_{\zeta',\chi,c} (d) =(\chi_2 \circ \Tr_{k_2 /\mathbb{F}_2}) 
 \left( {\zeta'}^{-2} \kappa_2 (d) \right) \ 
 \textrm{for} \ 
 d \in U_D ^1. 
\end{align*}
We put 
\[
 \rho_{\zeta', \chi,c} = 
 \Ind_{L^{\times} U_D ^1}^{D^{\times}} \theta_{\zeta', \chi,c}. 
\]
The representation 
$\rho_{\zeta', \chi,c}$ is irreducible 
by \cite[54.4 Proposition (1)]{BHLLCGL2}. 

\begin{prop}\label{expJL}
For $\zeta' \in k^{\times}$, 
$\chi \in (k^{\times})^{\vee}$ and 
$c \in \overline{\mathbb{Q}}_{\ell}^{\times}$, 
we have 
$\mathrm{JL} (\rho_{\zeta', \chi,c}) =\pi_{\zeta',\chi,c}$. 
\end{prop}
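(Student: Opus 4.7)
The plan is to recognize both $\pi_{\zeta',\chi,c}$ and $\rho_{\zeta',\chi,c}$ as representations arising from the Bushnell--Kutzko theory of ramified cuspidal types, and then match them via the explicit form of the Jacquet--Langlands correspondence for such types.

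First I would verify that $\Lambda_{\zeta',\chi,c}$ is a well-defined character of $L^{\times} U_{\mathfrak{J}}^1$. Multiplicativity on $U_{\mathfrak{J}}^1$ reduces to the congruence $\tr(\hat{\zeta}'^{-2}\varphi^{-1}XY) \in \mathfrak{p}$ for $X,Y \in \mathfrak{P}$, which follows by direct computation: $\mathfrak{P}^2$ has its bottom-left entries in $\mathfrak{p}^2$ and its other entries in $\mathfrak{p}$, so after multiplication by $\varphi^{-1}$ (which swaps rows up to a factor $\varpi^{-1}$) the diagonal of $\varphi^{-1}XY$ lies in $\mathfrak{p}$. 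Compatibility with the $L^{\times}$ part is preserved by conjugation by $\varphi$. An analogous check shows that $\theta_{\zeta',\chi,c}$ is a well-defined character of $L^{\times} U_D^1$, using that $[U_D^1,U_D^1] \subset 1+\varphi^2\mathcal{O}_D$ and that $\kappa_2$ is additive modulo this. Standard cuspidal type theory then yields that $\pi_{\zeta',\chi,c}$ is an irreducible supercuspidal representation of $\textit{GL}_2(K)$ of conductor three and that $\rho_{\zeta',\chi,c}$ is an irreducible representation of $D^{\times}$.

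Next I would appeal to the explicit description of the Jacquet--Langlands correspondence for supercuspidals attached to a ramified quadratic extension $L/K$, in the spirit of Bushnell--Henniart. Under this correspondence, the two characters $(\Lambda,\theta)$ on $(L^{\times} U_{\mathfrak{J}}^1,\ L^{\times} U_D^1)$ must agree on $\mathcal{O}_L^{\times}$, satisfy the sign relation $\Lambda(\varphi) = -\theta(\varphi)$ on the uniformizer, and have wild parts that correspond via a canonical transfer of characters between the hereditary orders $\mathfrak{J}$ and $\mathcal{O}_D$. For our pair, $\Lambda|_{\mathcal{O}_L^{\times}}$ and $\theta|_{\mathcal{O}_L^{\times}}$ both coincide with $\chi$, we have $\Lambda(\varphi)=-c=-\theta(\varphi)$, and both wild parts are given by trace characters involving the same coefficient $\hat{\zeta}'^{-2}$.

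The main obstacle is the verification that these wild parts correspond under the JL transfer, i.e.\ that $(\psi_K \circ \tr)(\hat{\zeta}'^{-2}\varphi^{-1}(x-1))$ for $x \in U_{\mathfrak{J}}^1$ matches $(\chi_2 \circ \Tr_{k_2/\mathbb{F}_2})({\zeta'}^{-2}\kappa_2(d))$ for $d \in U_D^1$ under the canonical identification of $U_{\mathfrak{J}}^1/(1+\mathfrak{P}^2)$ with $U_D^1/(1+\varphi^2\mathcal{O}_D)$ as $k$-vector spaces. This amounts to a direct computation comparing the trace pairing on $M_2(K)$ restricted to $\mathfrak{P}$ with the reduced trace on $D$ restricted to $\varphi\mathcal{O}_D$, together with checking that the defining stratum elements are matched. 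Once this is verified, uniqueness of the JL partner of $\rho_{\zeta',\chi,c}$, for instance via character comparison on the elliptic torus $L^{\times}$, gives the claimed equality $\mathrm{JL}(\rho_{\zeta',\chi,c}) = \pi_{\zeta',\chi,c}$.
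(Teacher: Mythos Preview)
Your approach is correct and is essentially the same as the paper's. The paper's proof simply cites \cite[56.5]{BHLLC} and verifies the single identity
\[
 (\psi_K \circ \Trd_{D/K})(\hat{\zeta}'^{-2}\varphi^{-1}(d-1))
 = (\chi_2 \circ \Tr_{k_2/\mathbb{F}_2})({\zeta'}^{-2}\kappa_2(d))
 \quad (d \in U_D^1),
\]
which is precisely your ``main obstacle'': it shows that the wild part $\theta_{\zeta',\chi,c}|_{U_D^1}$, defined via $\kappa_2$, coincides with the reduced-trace character attached to the same stratum element $\hat{\zeta}'^{-2}\varphi^{-1}$ that defines $\Lambda_{\zeta',\chi,c}|_{U_{\mathfrak{J}}^1}$. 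The sign relation $\Lambda(\varphi)=-\theta(\varphi)$ and the agreement on $\mathcal{O}_L^\times$ are then exactly the remaining hypotheses of \cite[56.5]{BHLLC}, so the conclusion follows immediately; the well-definedness checks you outline are routine and are absorbed into that reference.
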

\begin{proof}
This follows from \cite[56.5]{BHLLCGL2}, because 
\[
 (\psi_K \circ \Trd_{D/K} )\left( 
 \hat{\zeta}'^{-2} \varphi^{-1}(d-1) \right) 
 = 
 (\chi_2 \circ \Tr_{k_2 /\mathbb{F}_2}) 
 \left( {\zeta'}^{-2} \kappa_2 (d) \right) 
\] 
for $d \in U_D ^1$. 
\end{proof}

\begin{rem}
In \cite[56.1]{BHLLCGL2}, 
the local Jacquet--Langlands correspondence for $D^{\times}$ 
is characterized by coincidences of 
$L$-functions and $\epsilon$-factors. 
However, we can check that 
the correspondence between 
$\rho_{\zeta', \chi,c}$ and $\pi_{\zeta',\chi,c}$ satisfies 
the characterization by trace identities (\cf \cite{ITsimpJL} ). 
We note that the existence of 
the local Jacquet--Langlands correspondence for $D^{\times}$ 
satisfying the characterization by trace identities 
is proved in \cite{MieGeomJL} by purely local methods. 
\end{rem}

\subsubsection*{Definition of $\phi_c$ and $\tau_{\zeta', \chi, c}$}

For 
$c \in \overline{\mathbb{Q}}_{\ell}^{\times}$,
let $\phi_c \colon W_K \to \overline{\mathbb{Q}}_{\ell}^{\times}$ 
be the character defined by 
$\phi_c (\sigma)=c^{r_{\sigma}}$. 
For $\zeta' \in k^{\times}$, 
$\chi \in (k^{\times})^{\vee}$ and 
$c \in \overline{\mathbb{Q}}_{\ell}^{\times}$, 
we put 
\[
 \tau_{\zeta', \chi, c} =
 \tau_{\zeta'} \otimes (\chi \circ \lambda) \otimes \phi_c. 
\]
For a representation $V$ of 
a Weil group and an integer $m$, 
we write $V(m)$ for 
the $m$-times Tate twist of $V$. 
We choose 
$(-2)^{1/2} \in \overline{\mathbb{Q}}_{\ell}$. 

\begin{thm}\label{realLLC}
For $\zeta' \in k^{\times}$, 
$\chi \in (k^{\times})^{\vee}$ and 
$c \in \overline{\mathbb{Q}}_{\ell}^{\times}$, 
we have 
\[
 \Hom_{GL_2 (K)} (H^1 _{\mathrm{LT}} ,\pi_{\zeta', \chi,c}) \simeq 
 \tau_{\zeta', \chi, c} \otimes \rho_{\zeta', \chi,c} 
\]
as representations of $W_K \times D^{\times}$. 
\end{thm}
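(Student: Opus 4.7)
The plan is the following.

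First, I would combine Proposition \ref{realJL} with Proposition \ref{expJL} to obtain a $D^{\times}$-equivariant isomorphism
\[
 \Hom_{GL_2(K)}(H^1_{\mathrm{LT}}, \pi_{\zeta',\chi,c}) \simeq \rho_{\zeta',\chi,c}^{\oplus 2}.
\]
Since $\tau_{\zeta',\chi,c}$ is two-dimensional, the right-hand side of the theorem restricts to the same $D^{\times}$-module, so it suffices to identify the $W_K$-action on the multiplicity space
\[
 M := \Hom_{D^{\times}}\bigl(\rho_{\zeta',\chi,c},
 \Hom_{GL_2(K)}(H^1_{\mathrm{LT}}, \pi_{\zeta',\chi,c})\bigr)
\]
and show that $M \simeq \tau_{\zeta',\chi,c}$ as representations of $W_K$.

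Second, I would localize the $\pi_{\zeta',\chi,c}$-isotypic part of $H^1_{\mathrm{LT}}$ onto the elliptic-curve components of the semi-stable reduction of $\mathbf{X}_1(\mathfrak{p}^3)$ recalled in Section \ref{sstred}. The $\mathbb{P}^1$-components $\overline{\mathbf{Z}}^{\mathrm{c}}_{1,1}$ and $\overline{\mathbf{P}}^{\mathrm{c}}_{\zeta}$ contribute nothing to $H^1$, while the Deligne-Lusztig type components $\overline{\mathbf{Y}}_{1,2}$ and $\overline{\mathbf{Y}}_{2,1}$, cut out by $x^q y - xy^q =1$, realize only representations of depth zero, whose conductor is two. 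Since $\pi_{\zeta',\chi,c}$ has conductor three, its isotypic part of $H^1_{\mathrm{LT}}$ lies inside
$\bigoplus_{(\zeta,\zeta') \in \mathcal{S}} H^1(\overline{\mathbf{X}}^{\mathrm{c}}_{\zeta,\zeta'}, \overline{\mathbb{Q}}_{\ell})(1)$.

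Third, I would invoke Propositions \ref{descact} and \ref{cohtau} to describe this cohomology explicitly. Proposition \ref{descact} expresses the action of $\mathcal{O}_D^{\times} \rtimes W_K$ through the homomorphism $\Xi_{\zeta'}$: the factor $(\kappa_1(d)\bar{\lambda}_{\sigma}, \Fr_q^{-r_{\sigma}})$ permutes the components indexed by $\zeta$, while the factor landing in $Q \rtimes \mathbb{Z}$ acts within a single component via the representation identified with $\tau_q$ by Proposition \ref{cohtau}. The direct sum over $(\zeta,\zeta')$ is thus obtained by inducing $\tau_q$, pulled back via $\Xi_{\zeta'}$, from the stabilizer of a fixed pair to $\mathcal{O}_D^{\times} \rtimes W_K$. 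Applying Mackey/Frobenius reciprocity with respect to $L^{\times} U_D^1 \subset D^{\times}$ and taking the $\chi$-isotypic piece under the $\mathcal{O}_L^{\times} \simeq \mathcal{O}_{K_2}^{\times}/(1+\varphi\mathcal{O}_D)$-action contributes the twist $\chi \circ \lambda$ via $\kappa_1(d)\bar{\lambda}_{\sigma}$, while passing from the $(W_K \times D^{\times})^0$-description of \eqref{0isom} to the full $W_K \times D^{\times}$-action introduces the unramified twist $\phi_c$. This should give $\tau_{\zeta'} \otimes (\chi \circ \lambda) \otimes \phi_c = \tau_{\zeta',\chi,c}$, as desired.

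The main obstacle will be the last step, namely the precise matching of cuspidal-type data on the two sides. One has to verify that the $U_{\mathfrak{J}}^1$-component of $\Lambda_{\zeta',\chi,c}$, involving $(\psi_K \circ \tr)(\hat{\zeta}'^{-2} \varphi^{-1}(x-1))$, and the $U_D^1$-component of $\theta_{\zeta',\chi,c}$, involving $(\chi_2 \circ \Tr_{k_2/\mathbb{F}_2})({\zeta'}^{-2} \kappa_2(d))$, together cut out exactly the component $\overline{\mathbf{X}}^{\mathrm{c}}_{\zeta,\zeta'}$ on the geometric side, and that the scalar $c$ appearing through $\Lambda_{\zeta',\chi,c}(\varphi) = -c$ and $\theta_{\zeta',\chi,c}(\varphi) = c$ produces on the $W_K$-side exactly $\phi_c$, without any stray sign or Tate twist. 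Tracking these normalizations through the isomorphism \eqref{0isom} and through the Tate twist $(1)$ coming from Poincaré duality is where most of the technical work will sit.
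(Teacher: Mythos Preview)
Your outline is essentially the same strategy as the paper's proof: reduce via Propositions \ref{realJL} and \ref{expJL} to identifying the two-dimensional $W_K$-multiplicity space $M$, then compute $M$ from the elliptic-curve components of the semi-stable reduction using Propositions \ref{descact} and \ref{cohtau}.

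Two points where the paper's argument differs in detail and where you should be careful. First, for the localization step you argue on the $GL_2$-side (the $\overline{\mathbf{Y}}$-components carry only depth-zero cuspidals, hence the conductor-three piece lives on the $\overline{\mathbf{X}}$-components); the paper instead passes through $K_1(\mathfrak{p}^3)$-invariants to descend from $H^1_{\mathrm{LT}}$ to $H^1_{\mathrm{c}}(\mathbf{X}_1(\mathfrak{p}^3)_{\widehat{K}^{\mathrm{ac}}},\overline{\mathbb{Q}}_{\ell})$ and then isolates the relevant piece on the $D^{\times}$-side using the $\mathcal{O}_D^{\times}$-isotypy results of \cite[Propositions 7.3, 7.9, 7.16]{ITstab3}, which also gives the dimension count needed to turn an a priori inclusion into an equality. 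Second, for the final passage from $M\simeq\tau_{\zeta',\chi,c}$ to the tensor decomposition, the paper does not simply invoke irreducibility but twists by unramified characters so that both factors descend to a finite quotient of $W_K\times D^{\times}$ and then uses that irreducibles of a product of finite groups are external tensor products. (Also, a small slip: $L=K(\varphi)$ is totally ramified, so $\mathcal{O}_L^{\times}$ has residue $k^{\times}$, not $k_2^{\times}$; the $k_2^{\times}$-grading on the components comes from $\kappa_1$ on $\mathcal{O}_D^{\times}$, not from $\mathcal{O}_L^{\times}$.)
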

\begin{proof}
Let $\zeta' \in k^{\times}$, 
$\chi \in (k^{\times})^{\vee}$ and 
$c \in \overline{\mathbb{Q}}_{\ell}^{\times}$. 
By Proposition \ref{realJL} and 
Proposition \ref{expJL}, we know that 
\[
 \Hom_{D^{\times}} \bigl( \rho_{\zeta', \chi,c} , 
 \Hom_{GL_2 (K)} (H^1 _{\mathrm{LT}} ,\pi_{\zeta', \chi,c}) \bigr) 
 \simeq \tau' 
\]
for some two-dimensional $W_K$-representation $\tau'$. 
First, we will show that 
$\tau' =\tau_{\zeta', \chi, c}$. 

We put 
\[
 H^1 _{\mathbf{X}} = 
 \varinjlim_n H^1 _{\mathrm{c}} 
 \left( \mathbf{X}(\mathfrak{p}^n)_{\widehat{K}^{\mathrm{ac}}}, 
 \overline{\mathbb{Q}}_{\ell} \right)
\] 
and 
\[
 (\textit{GL}_2 (K) \times W_K \times D^{\times})^0 
 = \bigl\{ (g,\sigma,d) \in \textit{GL}_2 (K) \times W_K \times D^{\times} 
 \bigm| \lvert \det (g)^{-1} \Nrd_{D/K} (d) \rvert_K \cdot 
 \lvert \sigma \rvert =1 \bigr\}. 
\]
Then we have 
\begin{align*}
 \Hom_{GL_2 (K)} (H^1 _{\mathrm{LT}} ,\pi_{\zeta', \chi,c}) &\simeq 
 \Hom_{GL_2 (K)} \left( 
 \mathrm{c\mathchar`-Ind}
 _{(\textit{GL}_2 (K) \times W_K \times D^{\times})^0 }
 ^{\textit{GL}_2 (K) \times W_K \times D^{\times}} 
 H^1 _{\mathbf{X}} ,\pi_{\zeta', \chi,c} \right) \\ 
 &\subset 
 \Hom_{K_1(\mathfrak{p}^3)} \left( 
 \mathrm{c\mathchar`-Ind}
 _{K_1(\mathfrak{p}^3) \times (W_K \times D^{\times})^0 }
 ^{K_1(\mathfrak{p}^3) \times W_K \times D^{\times}} 
 H^1 _{\mathbf{X}} ,\pi_{\zeta', \chi,c} \right) \\ 
 &\subset 
 \Hom_{\overline{\mathbb{Q}}_{\ell}} \left( 
 \mathrm{c\mathchar`-Ind}
 _{(W_K \times D^{\times})^0 }
 ^{W_K \times D^{\times}} 
 H^1 _{\mathrm{c}} 
 \left( \mathbf{X}_1 (\mathfrak{p}^3)_{\widehat{K}^{\mathrm{ac}}}, 
 \overline{\mathbb{Q}}_{\ell} \right) , \overline{\mathbb{Q}}_{\ell} \right), 
\end{align*}
where the last inclusion follows by taking the 
$K_1(\mathfrak{p}^3)$-invariant part 
and using \cite[5 Th\'{e}or\`{e}me]{JPSSCond}, 
because the conductor of 
$\pi_{\zeta', \chi,c}$ is three. 
Hence, we obtain 
\begin{align} 
 \tau' &\simeq \Hom \bigl( \rho_{\zeta', \chi,c} , 
 \Hom_{GL_2 (K)} (H^1 _{\mathrm{LT}} ,\pi_{\zeta', \chi,c}) 
 \bigr) \notag \\ 
 &\subset 
 \Hom_{D^{\times}} \left( \rho_{\zeta', \chi,c} ,
 \left( \mathrm{c\mathchar`-Ind}
 _{(W_K \times D^{\times})^0 }
 ^{W_K \times D^{\times}} 
 H^1 _{\mathrm{c}} 
 \left(\mathbf{X}_1 (\mathfrak{p}^3)_{\widehat{K}^{\mathrm{ac}}}, 
 \overline{\mathbb{Q}}_{\ell} \right) \right)^* \right) \label{tau'incl} \\
 &\simeq 
 \left( \rho_{\zeta', \chi,c}^* \otimes 
 \left( \mathrm{c\mathchar`-Ind}
 _{(W_K \times D^{\times})^0 }
 ^{W_K \times D^{\times}} 
 H^1 _{\mathrm{c}} 
 \left( \mathbf{X}_1 (\mathfrak{p}^3)_{\widehat{K}^{\mathrm{ac}}}, 
 \overline{\mathbb{Q}}_{\ell} \right) \right)^* \right)^{D^{\times}} \\ 
 &\simeq 
 \Hom_{D^{\times}} \left( 
 \mathrm{c\mathchar`-Ind}
 _{(W_K \times D^{\times})^0 }
 ^{W_K \times D^{\times}} 
 H^1 _{\mathrm{c}} 
 \left( \mathbf{X}_1 (\mathfrak{p}^3)_{\widehat{K}^{\mathrm{ac}}}, 
 \overline{\mathbb{Q}}_{\ell} \right) , 
 \rho_{\zeta', \chi,c} ^* \right) \notag \\ 
 &\simeq 
 \Hom_{D^{\times}} \left( 
 \mathrm{c\mathchar`-Ind}
 _{(W_K \times D^{\times})^0 }
 ^{W_K \times D^{\times}} 
 \left( \bigoplus_{\zeta \in k_2 ^{\times}} H^1 
 (\overline{\mathbf{X}}^{\mathrm{c}}_{\zeta,\zeta'} , 
 \overline{\mathbb{Q}}_{\ell} )^* (-1) \right) , 
 \rho_{\zeta', \chi,c} ^* \right), \notag 
\end{align}
where the last isomorphism follows from 
\cite[Proposition 7.3, Proposition 7.9 and Theorem 7.16]{ITStab3} 
by studying only 
$\mathcal{O}_D ^{\times}$-actions. 
We remark that 
\cite[Theorem 7.16]{ITStab3} is based on 
\cite[Theorem 5.3]{ITCohrigc} where we use Berkovich spaces, 
but it does not matter for Lubin--Tate spaces 
by \cite[Lemma 4.4.6]{FarCohpdiv}. 
As vector spaces, the last space is isomorphic to 
\[
 \Hom_{D^{\times}} \left( 
 \mathrm{c\mathchar`-Ind}
 _{\mathcal{O}_D ^{\times}} ^{D^{\times}} 
 \left( \bigoplus_{\zeta \in k_2 ^{\times}} H^1 
 (\overline{\mathbf{X}}^{\mathrm{c}}_{\zeta,\zeta'} , 
 \overline{\mathbb{Q}}_{\ell} )^* \right) , 
 \rho_{\zeta', \chi,c} ^* \right) 
 \simeq 
 \Hom_{\mathcal{O}_D ^{\times}} \left( 
 \bigoplus_{\zeta \in k_2 ^{\times}} H^1 
 (\overline{\mathbf{X}}^{\mathrm{c}}_{\zeta,\zeta'} , 
 \overline{\mathbb{Q}}_{\ell} )^* , 
 \rho_{\zeta', \chi,c} ^* |_{\mathcal{O}_D ^{\times}} \right) 
\] 
which is two-dimensional by 
\cite[Proposition 7.9]{ITStab3}. 
Hence, the inclusion in 
\eqref{tau'incl} is an equality. 
Therefore it suffices to show that 
there is a non-trivial homomorphism 
\[
 \mathrm{c\mathchar`-Ind}
 _{(W_K \times D^{\times})^0 }
 ^{W_K \times D^{\times}} \left( 
 \bigoplus_{\zeta \in k_2 ^{\times}} H^1 
 (\overline{\mathbf{X}}^{\mathrm{c}}_{\zeta,\zeta'} , 
 \overline{\mathbb{Q}}_{\ell} )^* (-1) \right) \longrightarrow 
 \tau_{\zeta', \chi,c}^* \otimes \rho_{\zeta', \chi,c}^*
\]
as representations of $W_K \times D^{\times}$. 
By the Frobenius reciprocity, 
this is equivalent to give a non-trivial homomorphism 
\[
 (\tau_{\zeta', \chi,c} \otimes \rho_{\zeta', \chi,c} )
 |_{(W_K \times D^{\times})^0} 
 \longrightarrow 
 \bigoplus_{\zeta \in k_2 ^{\times}} H^1 
 (\overline{\mathbf{X}}^{\mathrm{c}}_{\zeta,\zeta'} , 
 \overline{\mathbb{Q}}_{\ell} )(1) 
\]
as representations of 
$(W_K \times D^{\times})^0$. 
We put 
\[
 (\mathcal{O}_D ^{\times} \rtimes W_K)^0 = 
 \{ (d,\sigma) \in \mathcal{O}_D ^{\times} \rtimes W_K \mid 
 \kappa_1 (d) \bar{\lambda}_{\sigma} =1\} 
\]
and consider this group as a subgroup of 
$(W_K \times D^{\times})^0$ by 
the isomorphism \eqref{0isom}. 
Then we have 
\[
 \bigoplus_{\zeta \in k_2 ^{\times}} H^1 
 (\overline{\mathbf{X}}^{\mathrm{c}}_{\zeta,\zeta'} , 
 \overline{\mathbb{Q}}_{\ell} )(1) 
 \simeq 
 \Ind _{(\mathcal{O}_D ^{\times} \rtimes W_K)^0 }
 ^{(W_K \times D^{\times})^0} H^1 
 (\overline{\mathbf{X}}^{\mathrm{c}}_{1,\zeta'} , 
 \overline{\mathbb{Q}}_{\ell} )(1), 
\]
because 
the action of 
$\mathcal{O}_D ^{\times} \rtimes W_K$ on 
$\coprod_{(\zeta,\zeta') \in \mathcal{S}} 
 \overline{\mathbf{X}}^{\mathrm{c}}_{\zeta,\zeta'}$ 
permutes the connected components transitively 
and 
$(\mathcal{O}_D ^{\times} \rtimes W_K)^0$ 
is the stabilizer of 
the connected component 
$\overline{\mathbf{X}}^{\mathrm{c}}_{1,\zeta'}$ 
by Proposition \ref{descact}. 
Hence, we have 
\begin{align*}
 \Hom_{(W_K \times D^{\times})^0} & \left( 
 (\tau_{\zeta',\chi,c} \otimes \rho_{\zeta',\chi,c} ) 
 |_{(W_K \times D^{\times})^0} , 
 \bigoplus_{\zeta \in k_2 ^{\times}} H^1 
 (\overline{\mathbf{X}}^{\mathrm{c}}_{\zeta,\zeta'} , 
 \overline{\mathbb{Q}}_{\ell} )(1) \right) \\ 
 &\simeq 
 \Hom_{(\mathcal{O}_D ^{\times} \rtimes W_K)^0} \left( 
 (\tau_{\zeta', \chi,c} \otimes \rho_{\zeta', \chi,c})
 |_{(\mathcal{O}_D ^{\times} \rtimes W_K)^0} , 
 H^1 (\overline{\mathbf{X}}^{\mathrm{c}}_{1,\zeta'} , 
 \overline{\mathbb{Q}}_{\ell} )(1) \right). 
\end{align*}
Since 
$\tau_{\zeta', \chi,c} \otimes \rho_{\zeta', \chi,c} \simeq 
 \Ind_{W_K \times L^{\times} U_D ^1} ^{W_K \times D^{\times}} 
 (\tau_{\zeta',\chi,c} \otimes \theta_{\zeta',\chi,c})$ 
and 
$(\mathcal{O}_D ^{\times} \rtimes W_K)^0 \subset 
 W_K \times L^{\times} U_D ^1$, 
we have a non-trivial homomorphism 
\[
 (\tau_{\zeta', \chi,c} \otimes \rho_{\zeta', \chi,c} ) 
 |_{(\mathcal{O}_D ^{\times} \rtimes W_K)^0} 
 \longrightarrow 
 (\tau_{\zeta',\chi,c} \otimes \theta_{\zeta',\chi,c}) 
 |_{(\mathcal{O}_D ^{\times} \rtimes W_K)^0}  
\]
by the Frobenius reciprocity. 
Hence, it suffices to show 
there is a non-trivial homomorphism 
\[
 (\tau_{\zeta',\chi,c} \otimes \theta_{\zeta',\chi,c}) 
 |_{(\mathcal{O}_D ^{\times} \rtimes W_K)^0} 
 \longrightarrow 
 H^1 (\overline{\mathbf{X}}^{\mathrm{c}}_{1,\zeta'} , 
 \overline{\mathbb{Q}}_{\ell} )(1) 
\]
as representations of 
$(\mathcal{O}_D ^{\times} \rtimes W_K)^0$. 
We put 
\[
 W'_K =\{ (\lambda_{\sigma}^{-1} ,\sigma ) \in 
 (\mathcal{O}_D ^{\times} \rtimes W_K)^0 \mid 
 \sigma \in W_K \}. 
\]
We consider 
$U_D ^1$ as a subgroup of 
$(\mathcal{O}_D ^{\times} \rtimes W_K)^0$ 
by identifying 
$d \in U_D ^1$ with 
$(d,1) \in (\mathcal{O}_D ^{\times} \rtimes W_K)^0$. 
Then we have an isomorphism 
\[
 (\tau_{\zeta',\chi,c} \otimes \theta_{\zeta',\chi,c}) 
 |_{W'_K} 
 \longrightarrow 
 H^1 (\overline{\mathbf{X}}^{\mathrm{c}}_{1,\zeta'} , 
 \overline{\mathbb{Q}}_{\ell} )(1)|_{W'_K} 
\]
as representations of $W'_K$ 
by Proposition \ref{descact} 
and the definition of 
$\tau_{\zeta',\chi,c}$ and 
$\theta_{\zeta',\chi,c}$. 
This isomorphism is compatible with 
the action of $U_D ^1$ 
by Proposition \ref{descact}. 
Then this is an isomorphism 
as representations of 
$(\mathcal{O}_D ^{\times} \rtimes W_K)^0$, 
because 
$(\mathcal{O}_D ^{\times} \rtimes W_K)^0$ 
is generated by 
$W'_K$ and $U_D ^1$. 
Thus we have proved that 
\begin{equation}\label{homisom}
 \Hom_{D^{\times}} \bigl( \rho_{\zeta', \chi,c} , 
 \Hom_{GL_2 (K)} (H^1 _{\mathrm{LT}} ,\pi_{\zeta', \chi,c}) \bigr) 
 \simeq \tau_{\zeta', \chi, c}. 
\end{equation}

By \eqref{homisom}, we see that 
$\Hom_{GL_2 (K)} (H^1 _{\mathrm{LT}} ,\pi_{\zeta', \chi,c})$ 
is an irreducible representation of $W_K \times D^{\times}$. 
The group $Q \rtimes (\mathbb{Z}/2\mathbb{Z})$ 
is regarded as a quotient of 
$W_K$ via $\Theta_{\zeta'}$. 
Let $\xi_c \colon D^{\times} \to \overline{\mathbb{Q}}_{\ell}^{\times}$ 
be the character defined by 
$\xi_c (d)=c^{v(\Nrd_{D/K}(d))}$. 
By \eqref{homisom}, 
we have 
\begin{align*}
 \Hom_{D^{\times}} \bigl( \rho_{\zeta', \chi,c} \otimes 
 \xi^{-1}_c , 
 \Hom_{GL_2 (K)} (H^1 _{\mathrm{LT}} ,\pi_{\zeta', \chi,c}) 
 \otimes \phi^{-1}_{c(-2)^{-f/2}} 
 &\otimes (\chi \circ \lambda)^{-1} \otimes \xi^{-1}_c \bigr) \\ 
 &\simeq \tau_{\zeta', \chi, c} 
 \otimes \phi^{-1}_{c(-2)^{-f/2}} 
 \otimes (\chi \circ \lambda)^{-1}. 
\end{align*}
Then we see that 
$\tau_{\zeta', \chi, c} \otimes \phi^{-1}_{c(-2)^{-f/2}}
 \otimes (\chi \circ \lambda)^{-1}$ and 
$\rho_{\zeta', \chi,c} \otimes \xi^{-1}_c$ 
factor through representations of 
$Q \rtimes (\mathbb{Z}/2\mathbb{Z})$ and 
$D^{\times}/(\varpi^{\mathbb{Z}} (1+\varpi\mathcal{O}_D))$ 
respectively, 
where we use 
Lemma \ref{quadind} 
for the first factorization. 
Hence, 
the $(W_K \times D^{\times})$-representation 
\[
 \Hom_{GL_2 (K)} (H^1 _{\mathrm{LT}} ,\pi_{\zeta', \chi,c}) 
 \otimes \phi^{-1}_{c(-2)^{-f/2}} 
 \otimes (\chi \circ \lambda)^{-1} \otimes \xi^{-1}_c 
\]
factors through 
a representation of the finite group 
$(Q \rtimes (\mathbb{Z}/2\mathbb{Z})) \times 
 \bigl( D^{\times}/(\varpi^{\mathbb{Z}} (1+\varpi\mathcal{O}_D))
 \bigr)$. 
Then we have 
\begin{align*}
 \Hom_{GL_2 (K)} (H^1 _{\mathrm{LT}} ,\pi_{\zeta', \chi,c}) 
 \otimes \phi^{-1}_{c(-2)^{-f/2}} 
 &\otimes (\chi \circ \lambda)^{-1} \otimes \xi^{-1}_c \\ 
 &\simeq \bigl( \tau_{\zeta', \chi, c} 
 \otimes \phi^{-1}_{c(-2)^{-f/2}} 
 \otimes (\chi \circ \lambda)^{-1} \bigr) 
 \otimes (\rho_{\zeta', \chi,c} \otimes 
 \xi^{-1}_c ), 
\end{align*}
because 
an irreducible representation of a product of two finite groups is 
isomorphic to 
a tensor product of irreducible representations of 
the two groups. 
Therefore, we have the claim. 
\end{proof}

\section{Local Langlands correspondence}\label{LLCe}
In this section, 
we prove that the correspondence in 
Theorem \ref{realLLC} actually gives the 
local Langlands correspondence. 
After we introduce some notations, 
we give an explicit description of the Artin map 
in Subsection \ref{ssec:ExArt}. 
This enables us to calculate an epsilon factor 
explicitly. 
In Subsection \ref{ssec:ExLLC}, 
we give an explicit description of 
the local Langlands correspondence for 
$\tau_{\zeta', \chi, c}$ using a result 
in Subsection \ref{ssec:ExArt}. 

We write 
$\mathcal{G}_2(K,\overline{\mathbb{Q}}_{\ell})$ 
for the set of equivalent classes of 
two-dimensional Frobenius semisimple Weil--Deligne representations 
of $W_K$ over 
$\overline{\mathbb{Q}}_{\ell}$, and 
$\mathrm{Irr} (\textit{GL}_2 (K), \overline{\mathbb{Q}}_{\ell})$ 
for the set of equivalent classes of 
irreducible smooth representations of 
$\textit{GL}_2 (K)$. 
For 
$\pi \in \mathrm{Irr} (\textit{GL}_2 (K), \overline{\mathbb{Q}}_{\ell})$, 
let $\omega_{\pi}$ denote the central character of 
$\pi$. 
Let 
\[
 \mathrm{LL}_{\ell} \colon 
 \mathcal{G}_2(K,\overline{\mathbb{Q}}_{\ell}) \to 
 \mathrm{Irr} (\textit{GL}_2 (K), \overline{\mathbb{Q}}_{\ell}) 
\]
be the $\ell$-adic Langlands correspondence. 
We follow the normalization in \cite[35.1]{BHLLCGL2}. 
If we take an isomorphism 
$\iota \colon \overline{\mathbb{Q}}_{\ell} \simeq \mathbb{C}$, 
then ${}^{\iota} \tau$ and 
${}^{\iota} \pi$ denote 
the representations over $\mathbb{C}$ associated to 
$\tau$ and $\pi$ by $\iota$ respectively 
for 
$\tau \in \mathcal{G}_2(K,\overline{\mathbb{Q}}_{\ell})$ and 
$\pi \in \mathrm{Irr} (\textit{GL}_2 (K), \overline{\mathbb{Q}}_{\ell})$. 
We use similar notations also over 
a finite extension of $K$. 

\begin{rem}\label{ellLLC}
The $\ell$-adic Langlands correspondence 
$\mathrm{LL}_{\ell}$ satisfies that 
\[
 \omega_{\mathrm{LL}_{\ell}(\tau)} \circ \mathrm{Art}_K ^{-1} = 
 (\det \tau ) \otimes \lvert \cdot \rvert^{-1} 
\]
for 
$\tau \in \mathcal{G}_2(K,\overline{\mathbb{Q}}_{\ell})$. 
If we take an isomorphism 
$\iota \colon \overline{\mathbb{Q}}_{\ell} \simeq 
 \mathbb{C}$, 
then we have 
\[
 \varepsilon \left( {}^{\iota} \tau ,s, \psi \right) = 
 \varepsilon \left( {}^{\iota} \mathrm{LL}_{\ell}(\tau) , 
 s+\frac{1}{2}, \psi \right) 
\]
for any non-trivial additive character 
$\psi \colon K \to \mathbb{C}^{\times}$. 
\end{rem}

For a finite extension $K'$ of $K$, 
we define an additive character 
$\psi_{K'} \colon K' \to \overline{\mathbb{Q}}_{\ell}^{\times}$ 
by $\psi_{K'} =\psi_K \circ \Tr_{K'/K}$, and 
let $v_{K'}$ be the normalized discrete valuation of $K'$ 
that sends a uniformizer to $1$. 

\subsubsection*{Definition of $F$, $L'$, $\epsilon_{F/K}$, $\Lambda_{F,\zeta'}$ and $\pi_{F,\zeta'}$}

We take $\zeta' \in k^{\times}$. 
We simply write $\pi_{\zeta'}$, $\Lambda_{\zeta'}$ 
and $\tau_{\zeta'}$ for 
$\pi_{\zeta',1,1}$, $\Lambda_{\zeta',1,1}$ 
and $\tau_{\zeta',1,1}$ respectively. 
We put 
\[
 F=K(\zeta'' \varpi^{1/3}) \quad 
 \textrm{and} \quad 
 L'=F(\varphi). 
\]
We define 
$\mathfrak{J}_F ,\mathfrak{P}_F \subset M_2 (\mathcal{O}_F)$ 
similarly to $\mathfrak{J}$ and $\mathfrak{P}$ as in 
\eqref{JPdef}. 
We put $U_{\mathfrak{J}_F}^i =1+\mathfrak{P}_F ^i$ 
for any positive integer $i$. 
We consider $L'$ as an $F$-subalgebra of 
$M_2 (F)$ similarly as \eqref{embL}. 
We put 
\[
 \epsilon_{F/K} =(-1)^f. 
\]
Let $\pi_{F,\zeta'}$ be the 
tame lifting of $\pi_{\zeta'}$ 
to $F$. 
See \cite[46.5 Definition]{BHLLCGL2} 
for the tame lifting. 
We define 
a character 
$\Lambda_{F,\zeta'} \colon 
 {L'}^{\times} U_{\mathfrak{J}_F} ^2 \to 
 \overline{\mathbb{Q}}_{\ell}^{\times}$ by 
\begin{align*}
 \Lambda_{F,\zeta'} (x) &=\epsilon_{F/K} ^{v_{L'} (x)} 
 \Lambda_{\zeta'} (\Nr_{L'/L} (x)) \quad 
 \textrm{for $x \in L'^{\times}$,} \\ 
 \Lambda_{F,\zeta'} (x) &= 
 (\psi_F \circ \tr ) 
 \left( \hat{\zeta}'^{-2} \varphi^{-1} (x-1) \right) \quad 
 \textrm{for $x \in U_{\mathfrak{J}_F} ^2$.} 
\end{align*}
Then we have 
$\pi_{F,\zeta'} =
 \mathrm{c\mathchar`-Ind}_{{L'}^{\times} U_{\mathfrak{J}_F}^2} 
 ^{\textit{GL}_2(F)} \Lambda_{F,\zeta'}$ 
by \cite[46.3 Proposition]{BHLLCGL2} and 
the construction of the tame lifting. 

We will describe the 
restriction of $\tau_{\zeta'}$ 
to $W_F$. 
The field $F$ corresponds to 
the subgroup 
$Q_8 \rtimes \mathbb{Z}$ of 
$Q \rtimes \mathbb{Z}$. 

\subsubsection*{Definition of $\delta_2$, $\delta_4$ and $\theta_2$} 

First, we consider the case where 
$f$ is even. 
We put $h_0 (x)=x^2-x$. 
Then we have 
\[
 h_0 (\delta^2 -\delta) \equiv 1/(\zeta'' \varpi^{1/3}) \pmod{3/4}. 
\]
Hence we can take 
$\delta_2 \in F(\delta)$ 
such that 
$h_0 (\delta_2)=1/(\zeta'' \varpi^{1/3})$ 
and 
$\delta_2 \equiv \delta^2 -\delta \pmod{3/4}$
by Newton's method. 
Similarly, 
we can take $\delta_4 \in F(\delta)$ 
such that 
$\delta_4 ^2 - \delta_4=\delta_2$ 
and 
$\delta_4 \equiv \delta \pmod{3/4}$. 
Then we have 
$F(\delta_4) =F(\delta)$. 
Further, we can take 
$\theta_2 \in F(\theta)$ such that 
$\theta_2 ^2 -\theta_2 =\delta_4 ^3$ 
and 
$\theta_2 \equiv \theta \pmod{7/12}$. 
We have $F(\theta_2) =F(\theta)$. 
Then 
$F(\delta_2)$ corresponds to 
the subgroup 
$C_4 \times \mathbb{Z}$ of 
$Q \times \mathbb{Z}$. 

Next, we consider the case where 
$f$ is odd. 
We put $h_1 (x)=x^2-x +1$. 
Then we have 
\[
 h_1 (\delta^2 -\delta +\zeta_3) \equiv 
 1/(\zeta'' \varpi^{1/3}) \pmod{3/4}. 
\]
Hence we can take 
$\delta_2 \in F(\zeta_3, \delta)$ 
such that 
$h_1 (\delta_2)=1/(\zeta'' \varpi^{1/3})$ 
and 
$\delta_2 \equiv \delta^2 -\delta +\zeta_3 \pmod{3/4}$
by Newton's method. 
Similarly, 
we can take $\delta_4 \in F(\zeta_3 ,\delta)$ 
such that 
$\delta_4 ^2 - \delta_4 +\zeta_3 =\delta_2$ 
and 
$\delta_4 \equiv \delta \pmod{3/4}$. 
Then we have 
$F(\zeta_3 ,\delta_4) =F(\zeta_3 ,\delta)$. 
Further, we can take 
$\theta_2 \in F(\zeta_3, \theta)$ such that 
$\theta_2 ^2 -\theta_2=\delta_4 ^3$ 
and 
$\theta_2 \equiv \theta \pmod{7/12}$. 
We have $F(\zeta_3,\theta_2) =F(\zeta_3,\theta)$. 
Then 
$F(\delta_2)$ corresponds to 
the subgroup 
$C$ of 
$Q \rtimes \mathbb{Z}$. 

We note that $v(\delta_2)=-1/6$ for any $f$. 

\subsubsection*{Definition of $E$, $\phi_{\zeta'}$ and $\varkappa_{E/F}$}

We put $E=F(\delta_2)$. 
The image of $W_E$ under $\Theta_{\zeta'}$ equals $C$. 
Let $\phi_{\zeta'}$ be the character of 
$W_E$ induced from 
$\phi'$ by $\Theta_{\zeta'}$. 
Then we have 
\[
 \tau_{\zeta'} |_{W_F} \simeq 
 \Ind_{W_E}^{W_F} \phi_{\zeta'}. 
\]
We consider $\phi_{\zeta'}$ as a character of 
$E^{\times}$ by the Artin reciprocity map 
$\mathrm{Art}_E$. 
For a finite extension $K'$ of $K$ 
and integer $i$, 
we write $\mathfrak{p}_{K'}$ for 
the maximal ideal of 
$\mathcal{O}_{K'}$, and 
put $U_{K'}^i =1+\mathfrak{p}_{K'}^i$. 
Let $E_m$ be the unramified extension over $E$ 
of degree $m$ for a positive integer $m$. 
Let 
$\varkappa_{E/F}$ be the character of 
$F^{\times}$ with kernel 
$\Nr_{E/F} (E^{\times})$. 

\subsection{Explicit Artin reciprocity law}\label{ssec:ExArt}

The results in this subsection will be used 
in the proof of Proposition \ref{indLLC}. 

For a Galois group $G$ of a finite Galois extension of 
a non-Archimedean field, let 
$G_s$ and $G^t$ be the ramification subgroups of $G$ 
with lower numbering and upper numbering respectively. 
Note that 
\begin{equation}\label{eq:KerTr}
 \Ker \Tr_{k/\mathbb{F}_2} = 
 \{ \xi +\xi^2 \mid \xi \in k \}. 
\end{equation}

\begin{lem}\label{phivarkappa}
We have 
\[
 \phi_{\zeta'} (1+x) =\psi_E (\delta_2 ^3 x) 
\]
for $x \in \mathfrak{p}_E ^2$ and 
\[
 \varkappa_{E/F} (1+y) =
 \psi_F \left( (\zeta''\varpi^{1/3})^{-1} y \right) 
\]
for $y \in \mathfrak{p}_F$. 
\end{lem}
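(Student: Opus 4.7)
The plan is to handle the two identities separately, with the first being the main calculation.

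For the first identity, fix $x\in\mathfrak{p}_E^2$ and set $\sigma = \mathrm{Art}_E(1+x)\in W_E\subset W_F$. Since $\sigma$ fixes $F\ni\zeta''\varpi^{1/3}$ and also the maximal unramified subextension, one has $r_\sigma=0$, $\zeta_{3,\sigma}=1$, and $\sigma(\zeta_3)=\zeta_3$; by Proposition \ref{descact} this gives
\[
 \phi_{\zeta'}(1+x)
 = \phi_1\bigl(g(1,\bar\nu_\sigma^2,\bar\mu_\sigma),0\bigr),
\]
so the problem reduces to computing the residues $\bar\nu_\sigma,\bar\mu_\sigma\in\mathbb{F}_4$. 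I would replace $\delta$ and $\theta$ by their $E$-rational surrogates $\delta_4$ and $\theta_2$ via the congruences $\delta\equiv\delta_4\pmod{3/4}$ and $\theta\equiv\theta_2\pmod{7/12}$, and then compute $\sigma(\delta_4)-\delta_4$ and $\sigma(\theta_2)-\theta_2$ inside the two-step Artin--Schreier tower $E\subset E(\delta_4)\subset E(\theta_2)$ cut out by $\delta_4^2-\delta_4=\delta_2$ (with a $\zeta_3$-shift in the odd-$f$ case) and $\theta_2^2-\theta_2=\delta_4^3$. Explicit local reciprocity for an Artin--Schreier quadratic extension in residue characteristic $2$ expresses these differences in terms of residues of $\bar x$ against the Artin--Schreier parameters, yielding $\bar\nu_\sigma,\bar\mu_\sigma$ explicitly. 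Substituting into the definition of $\phi_1$ from Section \ref{cohell} (with the case distinction $f$ even versus odd, and invoking $\eta^2+(-2)^{(f+1)/2}\eta+q=0$ in the latter) should collapse the expression to $\psi_E(\delta_2^3 x)$.

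For the second identity, $E/F$ is a totally ramified separable quadratic extension defined by the Artin--Schreier-type relation $\delta_2^2-\delta_2=1/(\zeta''\varpi^{1/3})$ (respectively $\delta_2^2-\delta_2+1=1/(\zeta''\varpi^{1/3})$ when $f$ is odd), and $\varkappa_{E/F}$ is the unique quadratic character of $F^\times$ with kernel $\Nr_{E/F}(E^\times)$. The plan is to verify the formula on the principal unit group by a direct norm computation: for suitable $\alpha\in \mathcal{O}_E$, expand $\Nr_{E/F}(1+\alpha) = 1 + \Tr_{E/F}(\alpha) + \Nr_{E/F}(\alpha)$ using the minimal polynomial of $\delta_2$, and identify explicitly which $1+y$ lie in the image; equivalently, one can apply the standard Hilbert-symbol formula for a separable quadratic extension of a residue-characteristic-$2$ local field, with Artin--Schreier parameter $1/(\zeta''\varpi^{1/3})$.

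The main obstacle will be the $\bar\mu_\sigma$-part of the first identity: both Artin--Schreier steps of the tower contribute, and one must verify that the $\theta$-contribution combines with the $\nu_\sigma^2\delta+\nu_\sigma^3$ correction through the case-by-case recipe for $\phi_1$ to produce the single clean expression $\psi_E(\delta_2^3 x)$ rather than a mixed one. This requires keeping careful track of the truncation levels in the congruences for $\delta,\theta,\delta_4,\theta_2$ and of the twist factors built into $\phi_1$.
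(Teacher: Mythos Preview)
Your outline points in the right direction, but it elides precisely the computation that carries the argument, and the paper organizes the proof so that the complication you flag at the end never arises.

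The paper does not try to compute $\bar\nu_\sigma$ and $\bar\mu_\sigma$ separately for $\sigma=\mathrm{Art}_E(1+x)$. It first works out the ramification filtration of $G=\Gal(E_2(\theta)/E)$ in upper numbering and finds $G^{(2)}=\Gal(E_2(\theta)/E_2(\delta))\simeq Z$, with $G^{(t)}=1$ for $t>2$. By Serre's description of the Artin map on unit filtrations (\cite[XV \S2, Corollaire 3]{SerCL}), the restriction $\phi_{\zeta'}|_{U_E^2}$ therefore factors as
\[
 U_E^2 \twoheadrightarrow U_E^2/\bigl(U_E^3\,\Nr_{E_2(\theta)/E}(U_{E_2(\theta)}^3)\bigr)\xrightarrow{\ \sim\ } Z \xrightarrow{\ \phi|_Z\ } \overline{\mathbb{Q}}_\ell^\times.
\]
In particular $\bar\nu_\sigma=0$ automatically for every $x\in\mathfrak p_E^2$, so the $\nu_\sigma^2\delta+\nu_\sigma^3$ correction you anticipate never enters; the problem is $Z$-valued from the outset, and no case split on $\phi_1$ or use of the relation for $\eta$ is needed.

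What remains, and what your phrase ``explicit local reciprocity for an Artin--Schreier quadratic extension in residue characteristic $2$'' is standing in for, is an explicit computation of the norm map $\Nr_{E_2(\theta)/E}\colon U_{E_2(\theta)}^3/U_{E_2(\theta)}^4\to U_E^2/U_E^3$. Under the identifications $1+\theta^{-1}x\mapsto \bar x\in k_2$ and $1+\delta_2^{-2}x\mapsto \bar x\in k$, the paper checks that this becomes $N_2(x)=\Tr_{k_2/k}(x)^2+\Tr_{k_2/k}(x)$, whose image is exactly $\Ker\Tr_{k/\mathbb{F}_2}$. This pins down the map to $Z\simeq\mathbb{F}_2$ and gives $\phi_{\zeta'}(1+x)=(\chi_2\circ\Tr_{k/\mathbb{F}_2})(\overline{\delta_2^2 x})=\psi_E(\delta_2^3 x)$. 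The second identity is obtained by the same method applied to $E/F$, as you suggest.

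So the gap is that in mixed characteristic there is no ready-made ``Artin--Schreier reciprocity formula'' to invoke; the substance of the proof is the ramification-filtration reduction to $Z$ together with the concrete norm calculation on graded pieces, both of which your plan leaves unspecified.
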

\begin{proof}
We prove the first statement only in the case where 
$f$ is odd. 
It is easier to prove the first statement 
in the case where $f$ is even. 

We put 
$G=\Gal (E_2 (\theta)/E)$. 
For $\sigma \in I_E$, we can show that 
\[
 v \biggl( \sigma\biggl(\frac{\delta}{\theta}\biggl) 
 -\frac{\delta}{\theta} \biggr) =
 \begin{cases}
 \frac{1}{12} &\quad 
 \textrm{if $\zeta_{3,\sigma}=1$, $\nu_{\sigma} =1$,}\\ 
 \frac{1}{6} &\quad 
 \textrm{if $\zeta_{3,\sigma}=1$, $\nu_{\sigma}=0$, $\mu_{\sigma} =1$} 
 \end{cases}
\] 
by the definition of 
$\zeta_{\sigma}$, $\nu_{\sigma}$ and $\mu_{\sigma}$. 
Then we have 
\[
 \Gal (E_2 (\theta)/E_2) =G_0 =G_1 \supset 
 \Gal (E_2 (\theta)/E_2 (\delta)) =G_2 =G_3 \supset 
 \{1 \}=G_4
\] 
and 
\[
 G^t = 
 \begin{cases}
 \Gal (E_2 (\theta)/E_2 ) &\quad 
 \textrm{if $0 \leq t \leq 1$,}\\ 
 \Gal (E_2 (\theta)/E_2 (\delta)) &\quad 
 \textrm{if $1 < t \leq 2$,}\\ 
 \{ 1 \} &\quad 
 \textrm{if $2 < t$.} 
 \end{cases}
\] 
Then the restriction of $\phi_{\zeta'}$ to 
$U_E ^2$ equals the composite 
\[
 U_E ^2 \twoheadrightarrow 
 U_E ^2 /(U_E ^3 \Nr_{E_2(\theta)/E}(U_{E(\theta)}^3)) 
 \xrightarrow{\sim} 
 \Gal (E_2 (\theta)/E_2 (\delta)) 
 \simeq Z \xrightarrow{\phi |_Z} 
 \overline{\mathbb{Q}}_{\ell}^{\times} 
\]
by 
\cite[XV \S 2 Corollaire 3 au Th\'{e}or\`{e}me 1]{SerCL}. 
We define 
$N_2 \colon k_2 \to k$ by 
$N_2 (x) =\Tr_{k_2/k}(x)^2 +\Tr_{k_2/k}(x)$. 
Then we can check that 
\[
 \Nr_{E_2(\theta)/E} \colon 
 U_{E_2(\theta)}^3 /U_{E_2(\theta)}^4 \to 
 U_E ^2 /U_E ^3 
\] 
becomes 
$N_2 \colon k_2 \to k$ 
under the identifications 
\[ 
 U_{E_2(\theta)}^3 /U_{E_2(\theta)}^4 \simeq k_2 ; \ 
 1+\theta^{-1} x \mapsto \bar{x} \quad 
 \textrm{and} \quad 
 U_E^2 /U_E^3 \simeq k ; \ 
 1+\delta_2^{-2} x \mapsto \bar{x}. 
\] 
Therefore we have 
\begin{equation}\label{eq:phichi}
 \phi_{\zeta'} (1+x) =(\chi_2 \circ \Tr_{k/\mathbb{F}_2} ) 
 (\overline{\delta_2 ^2x} ) 
\end{equation}
for $x \in \mathfrak{p}_E^2$, 
because 
$\Image N_2 =\Ker \Tr_{k/\mathbb{F}_2}$ by \eqref{eq:KerTr}. 
Since we have 
$(\chi_2 \circ \Tr_{k/\mathbb{F}_2} ) 
 (\bar{x} ) =\psi_E (\delta_2 x)$ 
for $x \in \mathcal{O}_E$, 
the first statement follows. 

We can prove the second statement similarly. 
\end{proof}

\begin{lem}\label{detNr}
We consider $\hat{\zeta}'^{-2} \varphi^{-1}$ as an element of 
$\textit{GL}_2 (F)$ by the embedding \eqref{embL}. 
Then we have 
\begin{alignat*}{2}
 \det (\hat{\zeta}'^{-2} \varphi^{-1} ) &\equiv \Nr_{E/F} (\delta_2 ^3) 
 & \quad &\mod U_F ^1 , \\ 
 \tr (\hat{\zeta}'^{-2} \varphi^{-1} ) &\equiv 
 (\zeta'' \varpi^{\frac{1}{3}})^{-1} + \Tr_{E/F} (\delta_2 ^3 ) 
 & \quad &\mod \mathcal{O}_F . 
\end{alignat*}
\end{lem}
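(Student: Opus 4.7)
The plan is to compute both sides of each congruence by hand and compare. Under the embedding \eqref{embL} the element $\hat{\zeta}'^{-2}\varphi^{-1}$ becomes the anti-diagonal matrix $\begin{pmatrix} 0 & \hat{\zeta}'^{-2}\varpi^{-1} \\ \hat{\zeta}'^{-2} & 0 \end{pmatrix}$, so that $\det(\hat{\zeta}'^{-2}\varphi^{-1}) = -\hat{\zeta}'^{-4}\varpi^{-1}$ and $\tr(\hat{\zeta}'^{-2}\varphi^{-1}) = 0$; using the relation $\zeta''^3 = \hat{\zeta}'^4$ from the setup, the determinant can be rewritten as $-(\zeta''\varpi^{1/3})^{-3}$.

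Next, from the minimal polynomial of $\delta_2$ over $F$ (namely $h_0$ when $f$ is even, $h_1$ when $f$ is odd) I would read off $\Tr_{E/F}(\delta_2) = 1$ together with $\Nr_{E/F}(\delta_2) = -(\zeta''\varpi^{1/3})^{-1}$ in the even case and $\Nr_{E/F}(\delta_2) = 1 - (\zeta''\varpi^{1/3})^{-1}$ in the odd case. Multiplicativity of the norm then gives $\Nr_{E/F}(\delta_2^3) = \Nr_{E/F}(\delta_2)^3$, while the Newton identity $\delta_2^3 + \delta_2'^3 = (\delta_2+\delta_2')^3 - 3\delta_2\delta_2'(\delta_2+\delta_2')$ for the conjugate $\delta_2'$ yields a closed form for $\Tr_{E/F}(\delta_2^3)$.

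For the determinant congruence, in the even-$f$ case $\Nr_{E/F}(\delta_2^3) = -(\zeta''\varpi^{1/3})^{-3}$ agrees with $\det$ on the nose, while in the odd-$f$ case expanding $(1 - (\zeta''\varpi^{1/3})^{-1})^3$ introduces extra terms $1 - 3(\zeta''\varpi^{1/3})^{-1} + 3(\zeta''\varpi^{1/3})^{-2}$; dividing by $\det$ and using $v(3) = 0$ together with $v((\zeta''\varpi^{1/3})^{-1}) = -1/3$ shows each correction acquires strictly positive valuation, so the ratio lies in $U_F^1$. For the trace congruence, substituting the formula for $\Tr_{E/F}(\delta_2^3)$ collapses the right-hand side to $1 + 4(\zeta''\varpi^{1/3})^{-1}$ in the even case and $-2 + 4(\zeta''\varpi^{1/3})^{-1}$ in the odd case; since $p=2$ forces $v(2) \geq 1$ and hence $v(4(\zeta''\varpi^{1/3})^{-1}) > 0$, every summand lies in $\mathcal{O}_F$, matching $\tr = 0$.

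The only point requiring care is the $p=2$ valuation bookkeeping — one must recall that powers of $2$ supply enough positive valuation to absorb the negative contribution $v((\zeta''\varpi^{1/3})^{-1}) = -1/3$, whereas $3$ remains a unit — but beyond this there is no substantive obstacle, and the argument reduces to the symmetric-function identities above together with the explicit matrix computation.
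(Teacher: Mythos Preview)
Your proposal is correct and is exactly the ``easy calculations'' the paper alludes to without writing out: the explicit anti-diagonal matrix for $\hat{\zeta}'^{-2}\varphi^{-1}$, the minimal polynomial of $\delta_2$ over $F$ read off from $h_0$ or $h_1$, and the Newton identity for $\Tr_{E/F}(\delta_2^3)$, followed by the $2$-adic bookkeeping. Your values for $\Nr_{E/F}(\delta_2)$ match those the paper later records in the proof of Proposition~\ref{indLLC}, and the valuation estimates (with $3$ a unit and $v_F(2)\geq 1$) are handled correctly in both characteristics.
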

\begin{proof}
We have 
\[
 \det (\hat{\zeta}'^{-2} \varphi^{-1} )=
 -\hat{\zeta}'^{-4} \varpi^{-1} = - \zeta''^{-3} \varpi^{-1} 
\]
by \eqref{eq:zeta''}. On the other hand, we have 
\[
 \Nr_{E/F} (\delta_2 ^3) = \Nr_{E/F} (\delta_2)^3 
 =
 \begin{cases}
 - (\zeta''^3 \varpi)^{-1} & \textrm{if $f$ is even,}\\ 
 (1-(\zeta'' \varpi^{1/3})^{-1})^3 & \textrm{if $f$ is odd.} 
 \end{cases}
\]
Hence, we have the first congruence. 
We have $\tr (\hat{\zeta}'^{-2} \varphi^{-1} )=0$. 
On the other hand, we have 
\begin{align*}
 \Tr_{E/F} (\delta_2 ^3) &= \Tr_{E/F} (\delta_2)^3 -
 3 \Nr_{E/F} (\delta_2) \Tr_{E/F} (\delta_2) \\ 
 &=1- 3 \Nr_{E/F} (\delta_2) 
 = 
 \begin{cases}
 1 + 3 (\zeta'' \varpi^{1/3})^{-1} & \textrm{if $f$ is even,}\\ 
 -2 + 3 (\zeta'' \varpi^{1/3})^{-1} & \textrm{if $f$ is odd.} 
 \end{cases}
\end{align*}
Hence, we have the second congruence. 
\end{proof}

Let $\mathcal{E}'$ be the elliptic curve over $\mathbb{F}_2$ 
defined by $z^2 +z =w^3 +w$. 
Let $\alpha_1, \alpha_2 \in \overline{\mathbb{Q}}_{\ell}$ 
be the roots of 
$x^2 +2x +2 =0$. 

\begin{lem}\label{ratell}
We have 
\begin{align*}
 \lvert \mathcal{E} (\mathbb{F}_q) \rvert 
 &=q+1 -\left( (-2)^{1/2} \right)^f -\left( -(-2)^{1/2} \right)^f , \\  
 \lvert \mathcal{E}' (\mathbb{F}_q) \rvert 
 &=q+1 -\alpha_1^f -\alpha_2^f. 
\end{align*}
\end{lem}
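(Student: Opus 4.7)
The plan is to invoke the standard Weil-conjecture formula for the number of rational points of an elliptic curve $E/\mathbb{F}_2$ over $\mathbb{F}_{2^f}$, namely
\[
 \lvert E(\mathbb{F}_{2^f}) \rvert = 2^f + 1 - \alpha^f - \beta^f,
\]
where $\alpha, \beta$ are the eigenvalues of the geometric Frobenius on $H^1(E_{\overline{\mathbb{F}}_2}, \overline{\mathbb{Q}}_{\ell})$; equivalently, the roots of $x^2 - a_1 x + 2$ with $a_1 = 3 - \lvert E(\mathbb{F}_2) \rvert$. Everything then reduces to a single point count over $\mathbb{F}_2$ for each curve.

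For $\mathcal{E}$, the count $\lvert \mathcal{E}(\mathbb{F}_2) \rvert = 3$ is already recorded in the proof of Proposition \ref{cohtau}. Hence $a_1 = 0$, the characteristic polynomial is $x^2 + 2$, whose roots are $\pm(-2)^{1/2}$, and substitution yields the first equality.

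For $\mathcal{E}'$, I first count $\lvert \mathcal{E}'(\mathbb{F}_2) \rvert$ by hand. Since $w^3 + w = 0$ for every $w \in \mathbb{F}_2$, the equation $z^2 + z = w^3 + w$ reduces to $z^2 + z = 0$, which has two solutions $z \in \{0,1\}$ for each of the two values of $w$; together with the point at infinity this gives $\lvert \mathcal{E}'(\mathbb{F}_2) \rvert = 5$. Therefore $a_1 = -2$, the characteristic polynomial is $x^2 + 2x + 2$, whose roots are $\eta_2$ and $\eta'_2$ by their very definition, and substitution yields the second equality.

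No genuine obstacle is expected: both claims are immediate applications of the Weil conjectures for elliptic curves over $\mathbb{F}_2$, reducing to a trivial enumeration of the finitely many $\mathbb{F}_2$-points.
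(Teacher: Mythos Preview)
Your proposal is correct and follows essentially the same approach as the paper: both appeal to the Lefschetz/Weil formula for elliptic curves over $\mathbb{F}_2$ and reduce to a point count over $\mathbb{F}_2$, with the paper referring back to the first paragraph of the proof of Proposition~\ref{cohtau} for $\mathcal{E}$ and saying ``similarly'' for $\mathcal{E}'$. The only cosmetic difference is that the paper (in that earlier paragraph) pins down the Frobenius eigenvalues via the counts over both $\mathbb{F}_2$ and $\mathbb{F}_4$, whereas you use only the $\mathbb{F}_2$-count together with $\alpha\beta = 2$; either route is immediate.
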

\begin{proof}
We have 
$\tr (\mathrm{fr}_2 ^* ;H^1(\mathcal{E}_{k^{\mathrm{ac}}},\overline{\mathbb{Q}}_{\ell}) )=0$ and 
$\tr (\mathrm{fr}_4 ^* ;H^1(\mathcal{E}_{k^{\mathrm{ac}}},\overline{\mathbb{Q}}_{\ell}) )=-4$ 
as in the proof of Lemma \ref{Trtau2}. 
Hence we obtain 
\[
 \tr (\mathrm{fr}_q ^* ;H^1(\mathcal{E}_{k^{\mathrm{ac}}},\overline{\mathbb{Q}}_{\ell}) )= 
 \left( (-2)^{1/2} \right)^f +\left( -(-2)^{1/2} \right)^f. 
\]
The first claim follows from this 
and the Lefschetz trace formula. 
The second claim is proved similarly 
by 
$\tr(\mathrm{fr}_2^\ast;H^1(\mathcal{E}'_{k^{\mathrm{ac}}},\overline{\mathbb{Q}}_{\ell}))=-2$ 
and 
$\tr(\mathrm{fr}_4^\ast;H^1(\mathcal{E}'_{k^{\mathrm{ac}}},\overline{\mathbb{Q}}_{\ell}))=0$. 
\end{proof}

If $f$ is odd, 
then the map $\Theta_{\zeta'}$ induces an 
isomorphism 
$W(E^{\mathrm{ur}}(\theta)/E) \simeq C$, and 
we write $\mathfrak{a}_E$ for the composite 
\[
 E^{\times} \xrightarrow{\mathrm{Art}_E} W_E^{\mathrm{ab}} 
 \twoheadrightarrow W(E^{\mathrm{ur}}(\theta)/E) \simeq C. 
\]
\begin{lem}\label{reclaw}
We assume that $f$ is odd. 
Let $n_f$ and $m_f$ be the integers such that 
$1 \leq n_f, m_f \leq 2$, 
$n_f \equiv (f+1)/2 \bmod 2$ and 
$m_f \equiv (f^2+7)/8 \bmod 2$. 
Then we have 
$\mathfrak{a}_E (\delta_2)=
 (g(1,\bar{\zeta}_3 ^{2n_f} ,\bar{\zeta}_3 ^{m_f}),-1)$. 
\end{lem}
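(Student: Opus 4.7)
The plan is to apply Proposition \ref{descact} to $\sigma = \mathrm{Art}_E(\delta_2) \in W_E$: one has $\mathfrak{a}_E(\delta_2) = \Theta_{\zeta'}(\sigma) = (g(\bar{\zeta}_{3,\sigma}, \bar{\zeta}_{3,\sigma}^2 \bar{\nu}_\sigma^2, \bar{\zeta}_{3,\sigma}\bar{\mu}_\sigma), r_\sigma)$, so the task reduces to computing the four parameters $r_\sigma, \zeta_{3,\sigma}, \nu_\sigma, \mu_\sigma$ attached to $\sigma$. Since $\sigma$ fixes $F \ni \zeta''\varpi^{1/3}$, one gets $\zeta_{3,\sigma} = 1$ at once, and the claim reduces to the identity $(g(1, \bar{\nu}_\sigma^2, \bar{\mu}_\sigma), r_\sigma) = (g(1, \bar{\zeta}_3^{2n_f}, \bar{\zeta}_3^{m_f}), -1)$.

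For $r_\sigma$: since $f$ is odd, $\gcd(3, q-1) = 1$, so I may choose $\zeta'' \in \mu_{q-1}(K) \subset K$, making $F = K(\varpi^{1/3})$ totally ramified of degree $3$ over $K$. The Artin--Schreier-type defining equation $h_1(\delta_2) = (\zeta''\varpi^{1/3})^{-1}$ (whose right-hand side has $v_F = -1$) forces $v_F(\delta_2) = -1/2$, so $E/F$ is totally ramified of degree $2$ with $f_{E/K} = 1$. Computing $\Nr_{E/K}(\delta_2) = 1 - \varpi^{-1}$ through the tower (via $\Nr_{E/F}(\delta_2) = 1 - \varpi^{-1/3}$ and $\Nr_{F/K}(1 - \varpi^{-1/3}) = 1 - \varpi^{-1}$) yields $|\sigma| = |1-\varpi^{-1}|_K = q$, so $r_\sigma = -1$. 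Moreover $\sigma|_{K^{\mathrm{ur}}}$ is the arithmetic Frobenius, so $\sigma(\zeta_3) = \zeta_3^q = \zeta_3^{-1}$.

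The substantive computation is that of $\nu_\sigma$ and $\mu_\sigma$, measuring the action of $\sigma$ on $\delta$ and $\theta$ modulo valuations $5/6$ and $0+$ respectively. Using $\delta_4 \equiv \delta \pmod{3/4}$ with $\delta_4^2 - \delta_4 + \zeta_3 = \delta_2$, the action of $\sigma$ on $\delta_4$ decomposes into the Frobenius twist $\sigma(\zeta_3) = \zeta_3^{-1}$ together with an Artin--Schreier shift coming from local class field theory applied to the quadratic extension $E(\zeta_3, \delta_4)/E(\zeta_3)$; an analogous analysis at the next layer $\theta_2^2 - \theta_2 = \delta_4^3$ (with $\theta_2 \equiv \theta \pmod{7/12}$) handles $\sigma(\theta)$, the relevant reciprocity at the finer level being encoded in the formula of Lemma \ref{phivarkappa} for $\phi_{\zeta'}|_{U_E^2}$. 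Iterating $\bar{\zeta}_3 \mapsto \bar{\zeta}_3^q$ through the $f$-fold residue-field extension in each Artin--Schreier layer, combined with the cubing and squaring that appear in the defining relation of $\mu_\sigma$, produces $\bar{\nu}_\sigma = \bar{\zeta}_3^{n_f}$ and $\bar{\mu}_\sigma = \bar{\zeta}_3^{m_f}$ with the claimed parities $n_f \equiv (f+1)/2$ and $m_f \equiv (f^2+7)/8$ modulo $2$. The hard part will be the precise bookkeeping of these $\zeta_3$-exponents through the two-step Artin--Schreier tower together with the $f$-fold Frobenius, in particular correctly combining the contributions of $\nu_\sigma^2 \delta + \nu_\sigma^3$ and $\sigma(\zeta_3) - \zeta_3$ to the congruence defining $\mu_\sigma$.
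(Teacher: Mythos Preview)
Your reduction to computing the four parameters $r_\sigma$, $\zeta_{3,\sigma}$, $\nu_\sigma$, $\mu_\sigma$ of $\sigma=\mathrm{Art}_E(\delta_2)$ is the right framework, and the arguments for $\zeta_{3,\sigma}=1$ and $r_\sigma=-1$ are fine. The gap is in the paragraph on $\nu_\sigma$ and $\mu_\sigma$, which is the entire substance of the lemma. You invoke ``an Artin--Schreier shift coming from local class field theory'' and Lemma~\ref{phivarkappa}, but neither of these gives you what you need. Lemma~\ref{phivarkappa} describes $\phi_{\zeta'}$ only on $U_E^2$; it says nothing about the image of the uniformizer $\delta_2$, and indeed the paper \emph{uses} the present lemma to evaluate $\phi_{\zeta'}(\delta_2)$ in the proof of Proposition~\ref{indLLC}, so your reference is circular. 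More seriously, in mixed characteristic the quadratic layers $E(\zeta_3,\delta_4)/E(\zeta_3)$ and $E(\zeta_3,\theta_2)/E(\zeta_3,\delta_4)$ are not Artin--Schreier extensions, and there is no off-the-shelf explicit reciprocity formula that turns ``iterate $\bar\zeta_3\mapsto\bar\zeta_3^q$'' into a value of $\sigma(\delta_4)-\delta_4$ or $\sigma(\theta_2)-\theta_2$ for $\sigma$ associated to a \emph{uniformizer}. You would need something like a Schmid--Witt pairing, and you have not supplied one; the phrase ``the hard part will be the precise bookkeeping'' is exactly right, but the bookkeeping is the proof.

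The paper handles this by a genuinely different mechanism. It first passes to equal characteristic via Deligne's comparison \cite{Delp0}, which identifies $E_{(0)}^\times/U_{E_{(0)}}^3$ with $E_{(p)}^\times/U_{E_{(p)}}^3$ compatibly with Artin maps; this reduces to computing $\overline{\mathfrak a}_E(\delta_2)$ in characteristic~$p$. There the Artin map for the cyclic extension $E_2(\theta_2)/E$ is realized concretely by Brauer classes of cyclic algebras: one must show that the algebra $D_g$ built from $E_2(\theta_2)$ with the twist $(g(1,\bar\zeta_3^{2n_f},\bar\zeta_3^{m_f}),1)$ and $s^8=\delta_2$ is isomorphic to the algebra $D_\sigma$ built from the unramified extension $E_8$ with Frobenius twist and $t^8=\delta_2$. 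The paper exhibits explicit elements $s',\delta_4',\theta_2'\in D_\sigma$ satisfying the defining relations of $D_g$, which is a direct computation with Teichm\"uller lifts. Your outline does not contain this idea or any substitute for it.
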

\begin{proof}
Let $\overline{C}$ be the image of $C$ in 
$Q_8 \rtimes (\mathbb{Z}/2\mathbb{Z})$. 
Then $\overline{C}$ is a cyclic group of order $8$. 
Let $\overline{\mathfrak{a}}_E$ be the composite of 
$\mathfrak{a}_E$ with the natural projection 
$C \to \overline{C}$. 
It suffices to show that 
\[
 \overline{\mathfrak{a}}_E (\delta_2 )
 =(g(1,\bar{\zeta}_3 ^{2n_f} ,\bar{\zeta}_3 ^{m_f}),1), 
\]
because we know that the second component of 
$\mathfrak{a}_E (\delta_2 )$ is $-1$. 
We note that the isomorphism 
$W(E^{\mathrm{ur}}(\theta)/E) \simeq C$ induces 
$\Gal (E_2 (\theta_2) /E) \simeq \overline{C}$. 
By this isomorphism, 
we consider 
$(g(1,\bar{\zeta}_3 ^{2n_f} ,\bar{\zeta}_3 ^{m_f}),1)$ as 
an element of 
$\Gal (E_2 (\theta_2) /E)$. 

We write $E_{(0)}$ for $E$ 
in the mixed characteristic case, 
and 
$E_{(p)}$ for $E$ 
in the equal characteristic case. 
We use similar notations for other fields 
and elements of the fields. 
Then we have the isomorphism 
\[
 E_{(0)}^{\times} /U^3_{E_{(0)}} \simeq 
 E_{(p)}^{\times} /U^3_{E_{(p)}} 
 ;\ 
 \hat{\xi}_0 +\hat{\xi}_1 \delta_{2,(0)}^{-1} 
 +\hat{\xi}_2 \delta_{2,(0)}^{-2} \mapsto 
 \xi_0 +\xi_1 \delta_{2,(p)}^{-1} 
 +\xi_2 \delta_{2,(p)}^{-2} 
\] 
where 
$\xi_0, \xi_1, \xi_2 \in k \subset E_{(p)}$. 
This isomorphism induces an isomorphism 
\[
 \bigl( \Gal (E_{(0)}^{\mathrm{sep}}/E_{(0)}) / 
 \Gal (E_{(0)}^{\mathrm{sep}}/E_{(0)})^3 \bigr)^{\mathrm{ab}} 
 \simeq 
 \bigl( \Gal (E_{(p)}^{\mathrm{sep}}/E_{(p)}) / 
 \Gal (E_{(p)}^{\mathrm{sep}}/E_{(p)})^3 \bigr)^{\mathrm{ab}} 
\]
by \cite[(3.5.2)]{Delp0}. 
It further induces an isomorphism 
\[
 \Gal \bigl( E_{2,(0)}(\theta_{2,(0)})/E_{(0)} \bigr) \simeq 
 \Gal \bigl( E_{2,(p)}(\theta_{2,(p)})/E_{(p)} \bigr). 
\]
Then we have a commutative diagram 
\begin{align*}
 &\xymatrix{
 E_{(0)}^{\times} /U^3_{E_{(0)}}
 \ar@{->}[r]^{\mathrm{Art}_{E_{(0)}} \hspace*{3em}} 
 \ar@{-}[d]^{\! \rotatebox{90}{$\sim$}}  & 
 \Gal \bigl( E_{2,(0)}(\theta_{2,(0)})/E_{(0)} \bigr) 
 \ar@{->}[r]^{\hspace*{4.5em} \sim} 
 \ar@{-}[d]^{\! \rotatebox{90}{$\sim$}}  & 
 \overline{C} 
 \ar@{=}[d] 
 \\
 E_{(p)}^{\times} /U^3_{E_{(p)}}
 \ar@{->}[r]^{\mathrm{Art}_{E_{(p)}} \hspace*{3em}} & 
 \Gal \bigl( E_{2,(p)}(\theta_{2,(p)})/E_{(p)} \bigr) 
 \ar@{->}[r]^{\hspace*{4.5em} \sim} & 
 \overline{C} 
 }
\end{align*}
by \cite[(3.6.1)]{Delp0} and 
the construction of the isomorphisms. 
Therefore, 
it suffices to show that 
$\overline{\mathfrak{a}}_E (\delta_2 )
 =(g(1,\bar{\zeta}_3 ^{2n_f} ,\bar{\zeta}_3 ^{m_f}),1)$ 
in the equal characteristic case. 

We assume that 
the characteristic of $E$ is $p$. 
We define the central division algebra $D_g$ 
over $E$ of degree $64$ by 
\[
 D_g = \bigoplus_{i=0} ^7 E_2 (\theta_2) s^i 
\]
where 
$s^8 =\delta_2$ and 
$s a s^{-1} =
 (g(1,\bar{\zeta}_3 ^{2n_f} ,\bar{\zeta}_3 ^{m_f}),1) (a)$ 
for $a \in E_2 (\theta_2)$. 
Let $\sigma_q \in \Gal(E_8 /E)$ be the lift of 
$\Fr_q$. 
We define 
the central division algebra $D_{\sigma}$ 
over $E$ of degree $64$ by 
\[
 D_{\sigma} = \bigoplus_{i=0} ^7 E_8 t^i 
\]
where 
$t^8 =\delta_2$ and 
$t a t^{-1} =\sigma_q (a)$ 
for $a \in E_8$. 
By the construction of 
the Artin reciprocity map, 
it suffices to show $D_g \simeq D_{\sigma}$ 
to prove the claim. 
To show this isomorphism, 
it suffices to find 
$s' ,\delta'_4 ,\theta'_2 \in D_{\sigma}$ such that 
\begin{align*}
 &{s'}^8=\delta_2,\quad 
 {\delta'_4}^2 -\delta'_4 +\zeta_3 =\delta_2,\quad 
 {\theta'_2}^2 -\theta'_2 ={\delta'_4}^3,\quad 
 \delta'_4 \theta'_2 =\theta'_2 \delta'_4 ,\\ 
 &s' \zeta_3 {s'}^{-1} =\zeta_3 ^2,\quad 
 s' \delta'_4 {s'}^{-1} = \delta'_4 +\zeta_3 ^{n_f},\quad 
 s' \theta'_2 {s'}^{-1} = \theta'_2 + 
 \zeta_3 ^{2n_f} \delta'_4 +\zeta_3 ^{m_f}. 
\end{align*}

We put $s' =t$. 
Then we have 
${s'}^8=\delta_2$ and 
$s' \zeta_3 {s'}^{-1} =\zeta_3 ^2$. 
We take $a_0 \in \mu_{q^4 -1} (E_4)$ such that 
$a_0 ^2 -a_0 =\zeta_3$. 
We put 
\[
 \delta'_4 =a_0 +t^2 +t^4. 
\]
Then we can check that 
${\delta'_4}^2 -\delta'_4 +\zeta_3 = \delta_2$ 
using 
$t^2 a_0 t^{-2} = a_0 + 1$. 
We can check also that 
$t \delta'_4 t^{-1} = \delta'_4 + \zeta_3 ^{n_f}$ 
using 
$t a_0 t^{-1} = a_0 + \zeta_3 ^{n_f}$. 

We take $b_0 \in \mu_{q^8 -1} (E_8)$ 
and 
$b_4 \in \mu_{q^4 -1} (E_4)$ 
such that 
$b_0 ^2 -b_0 =a_0 \zeta_3^2 +\zeta_3$ 
and 
$b_4 ^2=a_0$. 
We put 
\[
 \theta'_2 =b_0 +(a_0 +\zeta_3)t^2 +b_4 t^4 +t^6. 
\]
Then we can check that 
${\theta'_2}^2 -\theta'_2 ={\delta'_4}^3$, 
$\delta'_4 \theta'_2 =\theta'_2 \delta'_4$ and 
$t \theta'_2 t^{-1} = \theta'_2 + 
 \zeta_3 ^{2n_f} \delta'_4 +\zeta_3 ^{m_f}$ 
using 
$t b_0 t^{-1} =b_0 +a_0 \zeta_3 ^{2n_f} + \zeta_3 ^{m_f}$ and 
$t b_4 t^{-1} =b_4 +\zeta_3^{2n_f}$. 
Therefore, we have proved the claim. 
\end{proof}

\subsection{Explicit local Langlands correspondence}\label{ssec:ExLLC}

In the next proposition, 
we show that 
$\tau_{\zeta'} |_{W_F}$ corresponds to 
$\pi_{F,\zeta'}$ 
under the local Langlands correspondence 
by calculating their epsilon factors. 
We will show a correspondence over $K$ in 
Theorem \ref{expLLC} using 
the correspondence over $F$ and a construction of 
the local Langlands correspondence for 
primitive representations in \cite[50.3]{BHLLCGL2}. 

\begin{prop}\label{indLLC}
We have 
$\mathrm{LL}_{\ell} (\tau_{\zeta'} |_{W_F}) =\pi_{F,\zeta'}$. 
\end{prop}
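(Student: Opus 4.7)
The plan is to identify $\mathrm{LL}_{\ell}(\tau_{\zeta'}|_{W_F})$ with $\pi_{F,\zeta'}$ using Henniart's characterization of the local Langlands correspondence: two irreducible two-dimensional objects correspond under $\mathrm{LL}_{\ell}$ as soon as they have matching central characters and matching epsilon factors for all twists by characters of $F^{\times}$ (cf. Remark \ref{ellLLC}). Both representations are two-dimensional and irreducible (for the Galois side, irreducibility of $\Ind_{W_E}^{W_F}\phi_{\zeta'}$ follows from the non-triviality of $\phi_{\zeta'}$ modulo $W_E$-conjugation; for the $GL_2$ side, $\pi_{F,\zeta'}$ is compactly induced from a cuspidal type), so it suffices to check these two conditions.

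First, I would verify the central character identity $\omega_{\pi_{F,\zeta'}} \circ \mathrm{Art}_F^{-1} = (\det \tau_{\zeta'}|_{W_F}) \otimes |\cdot|^{-1}$. Since $\tau_{\zeta'}|_{W_F} \simeq \Ind_{W_E}^{W_F} \phi_{\zeta'}$, the determinant equals $\phi_{\zeta'}|_{F^{\times}} \cdot \varkappa_{E/F}$ after transport by $\mathrm{Art}_F$. On the automorphic side, $\omega_{\pi_{F,\zeta'}}$ is the restriction of $\Lambda_{F,\zeta'}$ to $F^{\times}$, and $\Lambda_{F,\zeta'}$ is defined via $\Nr_{L'/L}$ and the character $(\psi_F \circ \tr)(\hat{\zeta}'^{-2}\varphi^{-1}(\cdot-1))$. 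Combining Lemma \ref{phivarkappa} (which computes $\phi_{\zeta'}$ on $U_E^2$ and $\varkappa_{E/F}$ on $U_F^1$ in terms of $\psi_E$ and $\psi_F$) with Lemma \ref{detNr} (which gives $\det(\hat{\zeta}'^{-2}\varphi^{-1}) \equiv \Nr_{E/F}(\delta_2^3) \bmod U_F^1$) yields the required equality on wild inertia, while the tame part follows from the definition of the tame lifting and the factor $\epsilon_{F/K}$.

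Next, I would turn to the epsilon factor comparison. By inductivity of epsilon factors, $\varepsilon(\Ind_{W_E}^{W_F}(\phi_{\zeta'} \otimes \chi|_E), s, \psi_F)$ splits into an epsilon factor over $E$ and the Langlands $\lambda$-constant $\lambda_{E/F}(\psi_F)$ associated to the wildly ramified quadratic extension $E/F$. The epsilon factor of the character $\phi_{\zeta'}\otimes\chi|_E$ over $E$ can be evaluated using its level, computed via Lemma \ref{phivarkappa}. On the automorphic side, the epsilon factor of a cuspidal representation constructed from a simple type is given by an explicit Gauss-sum-type formula in terms of the character $\Lambda_{F,\zeta'}$ (Bushnell-Henniart 25.2 / 25.3). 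The point counts in Lemma \ref{ratell} of the elliptic curves $z^2+z=w^3$ and $z^2+z=w^3+w$ provide the precise Gauss sum values, in terms of $(-2)^{1/2}$ and the roots of $x^2+2x+2$, needed on both sides of the comparison.

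The main obstacle will be organizing the sign and twist contributions so that the Galois-side $\lambda_{E/F}(\psi_F)$, the Artin-Schreier contribution from $\phi_{\zeta'}$, and the automorphic-side Gauss sum all match after accounting for the tame lifting factor $\epsilon_{F/K}=(-1)^f$ and the twist relating $\varepsilon(\cdot, s, \psi)$ with $\varepsilon(\cdot, s+1/2, \psi)$. The explicit reciprocity formula $\mathfrak{a}_E(\delta_2) = (g(1,\bar{\zeta}_3^{2n_f}, \bar{\zeta}_3^{m_f}), -1)$ from Lemma \ref{reclaw} is exactly what is needed to evaluate $\phi_{\zeta'}$ on a uniformizer of $E$, which is the delicate ingredient in obtaining the Frobenius eigenvalue on the Galois side. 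Once all ingredients are collected, the equality of epsilon factors should reduce to a single identity involving Gauss sums over $k$, which the elliptic curve point counts of Lemma \ref{ratell} are tailored to provide.
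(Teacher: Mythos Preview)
Your outline diverges from the paper's proof in a way that creates a real gap. You propose to invoke the converse theorem directly: match central characters and then match $\varepsilon$-factors for \emph{all} character twists of $F^{\times}$. But the lemmas you cite (Lemma~\ref{phivarkappa}, Lemma~\ref{detNr}, Lemma~\ref{ratell}, Lemma~\ref{reclaw}) are tailored to compute exactly one Gauss sum --- the untwisted $\varepsilon(\phi_{\zeta'},1/2,\psi_E)$ --- and you give no argument for why the point counts of $z^2+z=w^3$ and $z^2+z=w^3+w$ would control $\varepsilon(\phi_{\zeta'}\otimes\chi|_E,1/2,\psi_E)$ for arbitrary (in particular wildly ramified) $\chi$. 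Without that, the converse-theorem route does not close.

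The paper avoids this entirely by inserting a type-theoretic step before any $\varepsilon$-factor computation. Writing $\pi'_{F,\zeta'}=\mathrm{LL}_{\ell}(\tau_{\zeta'}|_{W_F})$, it uses Lemma~\ref{phivarkappa} and Lemma~\ref{detNr} together with \cite[44.7 Proposition]{BHLLC} to show that $\pi'_{F,\zeta'}$ contains the simple stratum $(\mathfrak{J}_F,3,\hat{\zeta}'^{-2}\varphi^{-1})$; hence $\pi'_{\zeta'}=\mathrm{LL}_{\ell}(\tau_{\zeta'})$ contains $(\mathfrak{J},1,\hat{\zeta}'^{-2}\varphi^{-1})$, so $\pi'_{\zeta'}=\mathrm{c\mathchar`-Ind}\,\Lambda'_{\zeta'}$ with $\Lambda'_{\zeta'}|_{U_{\mathfrak{J}}^1}=\Lambda_{\zeta'}|_{U_{\mathfrak{J}}^1}$. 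Tame lifting \cite[46.3]{BHLLC} then forces $\Lambda'_{F,\zeta'}=\Lambda_{F,\zeta'}$ on $U_{L'}^1 F^{\times} U_{\mathfrak{J}_F}^2$ (the central-character match coming from Lemma~\ref{dettau}), so $\Lambda'_{F,\zeta'}$ is either $\Lambda_{F,\zeta'}$ or its twist by the unramified sign $\kappa_F\circ\det$. Only at this point does a \emph{single} $\varepsilon$-factor --- indeed only its sign --- need to be computed, and that is what Lemma~\ref{reclaw} and Lemma~\ref{ratell} are designed to provide. Your proposal is missing this reduction; if you want to keep the converse-theorem strategy, you must either supply Gauss-sum computations for the full family of twists or explain why a finite, explicitly controlled set of twists suffices and why the available lemmas cover it.
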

\begin{proof}
We put 
$\mathrm{LL}_{\ell} (\tau_{\zeta'}) =\pi'_{\zeta'}$ 
and 
$\mathrm{LL}_{\ell} (\tau_{\zeta'} |_{W_F}) =\pi'_{F,\zeta'}$. 
We want to show that 
$\pi'_{F,\zeta'} = \pi_{F,\zeta'}$. 

By Lemma \ref{phivarkappa}, Lemma \ref{detNr} and 
\cite[44.7 Proposition]{BHLLCGL2}, 
the representation 
$\pi'_{F,\zeta'}$ contains the ramified 
simple stratum 
$(\mathfrak{J}_F, 3, \hat{\zeta}'^{-2} \varphi^{-1} )$. 
Then the representation 
$\pi'_{\zeta'}$ contains the ramified 
simple stratum 
$(\mathfrak{J}, 1, \hat{\zeta}'^{-2} \varphi^{-1} )$ 
by the construction of 
$\pi'_{\zeta'}$ in \cite[50.3]{BHLLCGL2}. 
Therefore we have 
\[
 \pi'_{\zeta'} =
 \mathrm{c\mathchar`-Ind}_{L^{\times} U_{\mathfrak{J}}^1} 
 ^{\textit{GL}_2(K)} \Lambda'_{\zeta'} 
\]
for a character 
$\Lambda'_{\zeta'} \colon 
 L^{\times} U_{\mathfrak{J}} ^1 \to 
 \overline{\mathbb{Q}}_{\ell}^{\times}$ such that 
$\Lambda'_{\zeta'} =\Lambda_{\zeta'}$ on 
$U_{\mathfrak{J}} ^1$. 

Let $1_F$ denote the trivial character of $W_F$. 
We put 
\[
 \varkappa_{F/K} =\det \Ind_{W_F}^{W_K} 1_F. 
\]
Then $\varkappa_{F/K} |_{W_F} =1_F$ if $f$ is even, and 
$\varkappa_{F/K} |_{W_F}$ is the unramified character of 
order two if $f$ is odd. 
Hence, the definition of 
$\epsilon_{F/K}$ in \cite[46.3]{BHLLCGL2} 
coincides with that in this paper. 
By \cite[46.3 Proposition]{BHLLCGL2}, 
we have 
\[
 \pi'_{F,\zeta'} =
 \mathrm{c\mathchar`-Ind}_{{L'}^{\times} U_{\mathfrak{J}_F}^2} 
 ^{\textit{GL}_2(F)} \Lambda'_{F,\zeta'} 
\]
for a character 
$\Lambda'_{F,\zeta'} \colon 
 {L'}^{\times} U_{\mathfrak{J}_F} ^2 \to 
 \overline{\mathbb{Q}}_{\ell}^{\times}$ such that 
$\Lambda'_{F,\zeta'} =\Lambda_{F,\zeta'}$ on 
$U_{\mathfrak{J}_F} ^2$ and 
\[
 \Lambda'_{F,\zeta'} (x) =\epsilon_{F/K} ^{v_{L'} (x)} 
 \Lambda'_{\zeta'} (\Nr_{L'/L} (x))
\] 
for 
$x \in L'^{\times}$. 
Hence we have 
$\Lambda'_{F,\zeta'} (x)=1$ 
for $x \in U_{L'} ^1$, 
because $\Lambda'_{\zeta'} (x)=1$ 
for $x \in U_L ^1$. 
Then we see that 
$\Lambda'_{F,\zeta'} =\Lambda_{F,\zeta'}$ 
on 
$U_{L'} ^1 F^{\times} U_{\mathfrak{J}_F} ^2$, 
because 
$\Lambda'_{F,\zeta'} =\Lambda_{F,\zeta'}$ 
on $F^{\times}$ 
by Remark \ref{ellLLC} and Lemma \ref{dettau}. 

We define 
$\kappa_F \colon F^{\times} \to \overline{\mathbb{Q}}_{\ell}^{\times}$ 
by $\kappa_F (x)=(-1)^{v_F (x)}$. 
Since 
$\Lambda'_{F,\zeta'} =\Lambda_{F,\zeta'}$ 
on 
$U_{L'} ^1 F^{\times} U_{\mathfrak{J}_F} ^2$, 
we know that 
$\Lambda'_{F,\zeta'} =\Lambda_{F,\zeta'}$ or 
$\Lambda'_{F,\zeta'} =\Lambda_{F,\zeta'} \otimes (\kappa_F \circ \det)$. 
We take an isomorphism 
$\iota \colon \overline{\mathbb{Q}}_{\ell} \simeq \mathbb{C}$. 
Then, to show $\Lambda'_{F,\zeta'} =\Lambda_{F,\zeta'}$, 
it suffices to show that 
\[
 \varepsilon ({}^{\iota} \pi'_{F,\zeta'}, 1/2, \iota \circ \psi_F) = 
 \varepsilon ({}^{\iota} \pi_{F,\zeta'}, 1/2, \iota \circ \psi_F). 
\]
We note that 
we have already known this equality up to sign. 

In the sequel of this proof, 
we identify 
$\overline{\mathbb{Q}}_{\ell}$ with $\mathbb{C}$ 
by $\iota$, and omit to write $\iota$. 
By \cite[25.5 Corollary]{BHLLCGL2}, 
we obtain 
$\varepsilon (\pi_{F,\zeta'}, 1/2, \psi_F) 
 =-\epsilon_{F/K}$ 
using that 
$5$ is the least integer $m \geq 0$ such that 
$U_{\mathfrak{J}_F} ^{m+1} \subset \Ker \Lambda_{F,\zeta'}$. 
On the other hand, we have 
\[
 \varepsilon (\pi'_{F,\zeta'}, 1/2, \psi_F) 
 =\varepsilon (\tau_{\zeta'}|_{W_F}, 0, \psi_F) 
 =q^{\frac{3}{2}} \varepsilon (\tau_{\zeta'}|_{W_F}, 1/2, \psi_F), 
\]
because the conductor of $\tau_{\zeta'}|_{W_F}$ 
is three. 
Hence, it suffices to show that 
\[
 \varepsilon (\tau_{\zeta'}|_{W_F},1/2,\psi_F) =-\epsilon_{F/K} q^{-3/2}. 
\]

Let $\lambda_{E/F} (\psi_F)$ be the 
Langlands constant of $E$ over $F$ with respect to $\psi_F$ 
(\cf \cite[30.4]{BHLLCGL2}). 
Let $1_E$ denote the trivial character of $W_E$. 
Then we have 
\[
 \lambda_{E/F} (\psi_F) = 
 \varepsilon (\Ind_{W_E}^{W_F} 1_E, 1/2,\psi_F) 
 \varepsilon (1_E, 1/2,\psi_E)^{-1} = 
 \varepsilon (\varkappa_{E/F}, 1/2,\psi_F) = 
 \varkappa_{E/F}(\zeta''\varpi^{\frac{1}{3}}) 
 =\epsilon_{F/K}, 
\] 
where we use 
$\Ind_{W_E}^{W_F} 1_E \simeq 1_F \oplus \varkappa_{E/F}$ and 
\cite[23.5 Lemma 1 and Proposition]{BHLLCGL2} 
at the second equality, 
Lemma \ref{phivarkappa} and 
\cite[(23.6.2) and 23.6 Proposition]{BHLLCGL2} 
at the third equality, 
and 
Lemma \ref{phivarkappa} and 
the equality 
\[
 \Nr_{E/F} (\delta_2)=
 \begin{cases}
 -1/(\zeta''\varpi^{1/3}) &\quad \textrm{if $f$ is even}, \\ 
 -1/(\zeta''\varpi^{1/3}) +1&\quad \textrm{if $f$ is odd} 
 \end{cases}
\] 
at the last equality. 
Therefore, it suffices to show that 
\[
 \varepsilon (\phi_{\zeta'}, 1/2, \psi_E) =-q^{-3/2}, 
\]
because we have 
$\varepsilon (\tau_{\zeta'}|_{W_F}, 1/2, \psi_F) = 
 \varepsilon (\phi_{\zeta'}, 1/2, \psi_E) 
 \lambda_{E/F} (\psi_F)$. 

We define 
$\psi'_E$ by 
$\psi'_E (x)=\psi_E (\delta_2 x)$ for $x \in E^{\times}$. 
Then $\psi_E '$ has level one (\cf \cite[1.7 Definition]{BHLLCGL2}), 
and we have 
\begin{align*}
 \varepsilon (\phi_{\zeta'}, 1/2, \psi_E) = 
 \phi_{\zeta'} (\delta_2)^{-1} \varepsilon (\phi_{\zeta'}, 1/2, \psi'_E ) &= 
 q^{-\frac{1}{2}} \phi_{\zeta'} (\delta_2)^{-1} 
 \sum_{y \in U^1_E / U^2_E} \phi_{\zeta'} (\delta_2^2 y)^{-1} 
 \psi'_E (\delta_2^2 y) \\ 
 &=
 q^{-\frac{1}{2}} \phi_{\zeta'} (\delta_2^3)^{-1} 
 \sum_{x \in \mathfrak{p}_E / \mathfrak{p}^2_E} 
 \phi_{\zeta'} (1+x)^{-1} 
 \psi_E \left( \delta_2^3 (1+x) \right) \\ 
 &=
 q^{-\frac{1}{2}} \phi_{\zeta'} (\delta_2^3)^{-1} 
 \sum_{x \in \mathfrak{p}_E / \mathfrak{p}^2_E} 
 \phi_{\zeta'} (1+x)^{-1} 
 \psi_E (\delta_2^3 x) 
\end{align*}
by \cite[23.5 Lemma 1, (23.6.2) and (23.6.4)]{BHLLCGL2} and 
$\psi_E (\delta_2^3)=1$. 
Therefore it suffices to show that 
\begin{equation}\label{aim}
 \phi_{\zeta'} (\delta_2^3)^{-1} 
 \sum_{x \in \mathcal{O}_E / \mathfrak{p}_E} 
 \phi_{\zeta'} (1+\delta_2^{-1}x)^{-1} 
 \psi_E (\delta_2^2 x) = -q^{-1}. 
\end{equation}
Note that we have already known this equality up to sign. 

First, we consider the case where $f$ is even. 
Then we have 
\[
 \phi_{\zeta'} (\delta_2^3) =(-2)^{3f/2} \phi_{\zeta'} (-1) 
 =(-2)^{3f/2} 
\]
by 
\[
 \Nr_{E(\theta_2)/E} (\theta_2)= \Nr_{E(\delta_4)/E} (-\delta_4^3) 
 =\Nr_{E(\delta_4)/E} (\delta_4)^3 
 =-\delta_2^3 
\]
and Lemma \ref{phivarkappa} with $x=-2$. 
Hence, it suffices to show 
\[
 \sum_{x \in \mathcal{O}_E / \mathfrak{p}_E} 
 \phi_{\zeta'} (1+\delta_2^{-1}x)^{-1} 
 \psi_E (\delta_2^2 x) = -(-2)^{\frac{f}{2}}. 
\]
We simply write 
$\hat{\xi}^{2,4}$ for $\hat{\xi}^2 + \hat{\xi}^4$, 
and use similar notations also for other sums. 
We have 
\begin{equation}\label{evenphi}
 \phi_{\zeta'} ( 1+\hat{\xi}^{2,4}\delta_2 ^{-1}+ 
 \hat{\xi}^{1,2,3}\delta_2 ^{-2} ) =1
\end{equation}
for $\xi \in k$, since 
\begin{align*}
 \Nr_{E(\theta_2)/E} (1+\hat{\xi}\delta_4 \theta_2 ^{-1})
 &= \Nr_{E(\theta_2)/E} \left( 
 \theta_2 ^{-1}(\theta_2 +\hat{\xi}\delta_4 )\right) 
 = \Nr_{E(\delta_4)/E} \left( 
 \delta_4 ^{-2} ( \delta_4^2 -\hat{\xi} - \hat{\xi}^2 \delta_4 ) 
 \right) \\ 
 &=  \Nr_{E(\delta_4)/E} \left( 
 \delta_4 ^{-2} \left( (1-\hat{\xi}^2)\delta_4 +\delta_2 -\hat{\xi} 
 \right) \right) \\ 
 &\equiv 1+\hat{\xi}^{2,4}\delta_2 ^{-1}+ 
 \hat{\xi}^{1,2,3}\delta_2 ^{-2} 
 \pmod{1/2}. 
\end{align*}
Therefore we see that 
$\phi_{\zeta'} (1+\delta_2^{-1}x) =\pm \sqrt{-1}$ 
for $x \in \mathcal{O}_E$ if 
$\bar{x} \neq \xi^2 +\xi^4$ for any 
$\xi \in k$. 
Then we have 
\[
 \sum_{x \in \mathcal{O}_E / \mathfrak{p}_E} 
 \phi_{\zeta'} (1+\delta_2^{-1}x)^{-1} 
 \psi_E (\delta_2^2 x) = 
 \frac{1}{2} \sum_{\xi \in k} 
 \phi_{\zeta'} \bigl( 1+\hat{\xi}^{2,4}\delta_2^{-1} 
 \bigr)^{-1} 
 \psi_E \bigl( \delta_2^2 \hat{\xi}^{2,4} \bigr) , 
\] 
since we have already known that 
\[
 \sum_{x \in \mathcal{O}_E / \mathfrak{p}_E} 
 \phi_{\zeta'} (1+\delta_2^{-1}x)^{-1} 
 \psi_E (\delta_2^2 x) = \pm (-2)^{f/2} \in \mathbb{Q}. 
\]
We have 
\[ 
 \sum_{\xi \in k} 
 \phi_{\zeta'} \bigl( 1+\hat{\xi}^{2,4} \delta_2^{-1} 
 \bigr)^{-1} 
 \psi_E \bigl( \delta_2^2 \hat{\xi}^{2,4} \bigr) 
 = \sum_{\xi \in k} 
 \phi_{\zeta'} \bigl( 1+
 \hat{\xi}^3 \delta_2^{-2} \bigr) 
 \psi_E \bigl( \delta_2^2 \hat{\xi}^{2,4} \bigr), 
\]
since 
\[
 \phi_{\zeta'} ( 1+\hat{\xi}^{2,4} \delta_2^{-1} )^{-1} = 
 \phi_{\zeta'} ( 1+
 \hat{\xi}^{1,2,3} 
 \delta_2^{-2} ) = 
 \phi_{\zeta'} ( 1+
 \hat{\xi}^3 \delta_2^{-2} )
\]
by \eqref{eq:KerTr}, \eqref{eq:phichi} and \eqref{evenphi}. 
Further we have 
\begin{align*} 
 \sum_{\xi \in k} 
 \phi_{\zeta'} ( 1+ \hat{\xi}^3 \delta_2^{-2} )
 \psi_E ( \delta_2^2 \hat{\xi}^{2,4} ) 
 &= \sum_{\xi \in k} 
 \chi_2 \left( \Tr_{k/\mathbb{F}_2} (\xi^3 +\xi^2 +\xi^4) \right) \\ 
 &= \sum_{\xi \in k} 
 \chi_2 \left( \Tr_{k/\mathbb{F}_2} (\xi^3) \right) =-2(-2)^{\frac{f}{2}} 
\end{align*}
by Lemma \ref{phivarkappa} and \eqref{eq:KerTr}, 
because 
\[ 
 \lvert \{ (x,y) \in k^2 \mid x^2 +x =y^3 \} \rvert = 
 \lvert \mathcal{E} (\mathbb{F}_q ) \rvert -1 
 = q -2(-2)^{\frac{f}{2}} 
\]
by Lemma \ref{ratell}. 
Thus we have the claim in the case where $f$ is even. 

Next we consider the case where $f$ is odd. 
We define $n_f$ and $m_f$ as in Lemma \ref{reclaw}. 
We treat only the case where 
$n_f =m_f =1$. 
The other cases are proved similarly. 

We assume that $n_f =m_f =1$, 
which is equivalent to that $f \equiv 1 \bmod 8$. 
We put $\eta=\eta_2^f$. 
By Lemma \ref{reclaw} and the definition of 
$\phi_{\zeta'}$, we have 
$\phi_{\zeta'} (\delta_2) =q/\eta$. 
Hence, to prove \eqref{aim}, it suffices to show 
\[ 
 \sum_{x \in \mathcal{O}_E / \mathfrak{p}_E} 
 \phi_{\zeta'} (1+\delta_2^{-1}x)^{-1} 
 \psi_E (\delta_2^2 x) = -q^2 \eta^{-3}. 
\]
By \eqref{eq:phichi}, we have 
\begin{equation}\label{oddphi2'}
 \phi_{\zeta'} ( 
 1+ \hat{\xi}^{1,2} \delta_2 ^{-2} ) =1 
\end{equation} 
for $\xi \in k$. 
We have 
\begin{align}
 \phi_{\zeta'} \Bigl( 
 1+\Tr_{E_2/E} (\hat{\xi}^{2,4} ) \delta_2 ^{-1}+ 
 \bigl( \Tr_{E_2/E}(\hat{\xi}^{1,3} 
 ) + 
 \Nr_{E_2/E}( \hat{\xi}^{2,4} ) \bigr) \delta_2 ^{-2} \Bigr) =1 
 \label{oddphi1} 
\end{align}
for $\xi \in k_2$ by \eqref{oddphi2'} and 
\begin{align*}
 &\Nr_{E_2(\theta_2)/E} (1+\hat{\xi}\delta_4 \theta_2 ^{-1}) 
 = \Nr_{E_2(\delta_4)/E} \left( 
 \delta_4 ^{-2} ( \delta_4^2 -\hat{\xi} - \hat{\xi}^2 \delta_4 ) 
 \right) \\ 
 & \quad =  \Nr_{E_2(\delta_4)/E} \left( 
 \delta_4 ^{-2} \left( (1-\hat{\xi}^2)\delta_4 +\delta_2 -\hat{\xi} -\zeta_3 
 \right) \right) \\ 
 & \quad =  \Nr_{E_2/E} \left( 
 (\zeta_3-\delta_2)^{-2} 
 \left( (1-\hat{\xi}^2)^2(\zeta_3-\delta_2) 
 +(1-\hat{\xi}^2)(\delta_2 -\hat{\xi}-\zeta_3 ) 
 +(\delta_2 -\hat{\xi} -\zeta_3)^2 
 \right) \right) \\ 
 & \quad \equiv 1+\Tr_{E_2/E} ( \hat{\xi}^{2,4} ) \delta_2 ^{-1} 
 + \bigl( \Tr_{E_2/E}(\hat{\xi}^{1,3} +\hat{\xi}^2 \zeta_3^2 
 +\hat{\xi}^4 \zeta_3 ) + 
 \Nr_{E_2/E}(\hat{\xi}^{2,4}) \bigr) \delta_2 ^{-2} 
 \pmod{1/2}. 
\end{align*}
Then we have 
\begin{equation}\label{oddphi1'}
 \phi_{\zeta'} ( 
 1+ \hat{\xi}^{2,4} \delta_2 ^{-1} +
 \hat{\xi}^{1,3} \delta_2 ^{-2} ) =1 
\end{equation} 
for $\xi \in k$ 
by \eqref{oddphi2'} and \eqref{oddphi1}, 
because 
\begin{align*}
 \Tr_{k_2/k} (\xi^3) +\Nr_{k_2/k} (\xi^2 +\xi^4) =
 \Tr_{k_2/k} (\xi)^3 + 
 \bigl( \Nr_{k_2/k} &(\xi^2) + \xi^{q+1} \Tr_{k_2/k} (\xi) \bigr) \\ 
 &+ 
 \bigl( \Nr_{k_2/k} (\xi^2) + \xi^{q+1} \Tr_{k_2/k} (\xi) \bigr)^2 
\end{align*}
for $\xi \in k_2$. 
Since 
\[
 \Nr_{E_2(\theta_2)/E} (\theta_2/\delta_4) = 
 \Nr_{E_2(\delta_4)/E} (-\delta_4) = 
 \Nr_{E_2/E} (\zeta_3 -\delta_2) = 
 \delta_2 ^2 +\delta_2 +1 , 
\]
we have 
\[
 \phi_{\zeta'} (\delta_2 ^2 +\delta_2 +1) 
 =\phi' ( g(1,0,0),-2 )=(-2^{-1})^{-f}=-q. 
\]
Hence, we obtain 
$\phi_{\zeta'} (1+\delta_2 ^{-1} +\delta_2 ^{-2})=-\eta^2 q^{-1}$. 
Therefore we have 
\[
 \phi_{\zeta'} 
 \bigl( 1+ (\hat{\xi}^{2,4} +1) \delta_2^{-1} \bigr)^{-1} 
 \psi_E \bigl( \delta_2^2 (\hat{\xi}^{2,4} +1) \bigr) 
 = -\frac{q}{\eta^2} 
 \phi_{\zeta'} 
 (1+ \hat{\xi}^{2,4} \delta_2^{-1} )^{-1} 
 \psi_E (\delta_2^2 \hat{\xi}^{2,4} ) 
\]
by \eqref{oddphi2'} and 
$\phi_{\zeta'} (1+\delta_2^{-2}) = 
 \psi_E (\delta_2)=\psi_E (\delta_2^2)$, 
which follows from Lemma \ref{phivarkappa} 
and 
\[
 \Tr_{E/F} (\delta_2) =1 \equiv -1+2 (\zeta'' \varpi^{1/3})^{-1} = 
 \Tr_{E/F} (\delta_2^2) \pmod{2/3}. 
\]
Then we have 
\begin{align*}
 \sum_{x \in \mathcal{O}_E / \mathfrak{p}_E} 
 \phi_{\zeta'} (1+\delta_2^{-1}x)^{-1} 
 \psi_E (\delta_2^2 x) =
 \frac{1}{2} \left( 1-\frac{q}{\eta^2} \right) 
 \sum_{\xi \in k} 
 \phi_{\zeta'} 
 (1+\hat{\xi}^{2,4}\delta_2^{-1} )^{-1} 
 \psi_E (\delta_2^2 \hat{\xi}^{2,4}), 
\end{align*}
because 
$\xi \in \Ker \Tr_{k/\mathbb{F}_2} 
 =\{ {\xi'}^2 +{\xi'}^4 \mid \xi' \in k \}$ 
if and only if 
$\xi +1 \notin \Ker \Tr_{k/\mathbb{F}_2}$. 
Therefore, it suffices to show that 
\[
 \sum_{\xi \in k} 
 \phi_{\zeta'} 
 (1+\hat{\xi}^{2,4}\delta_2^{-1} )^{-1} 
 \psi_E (\delta_2^2 \hat{\xi}^{2,4}) = 
 -(-2)^{\frac{f+1}{2}}. 
\]
On the other hand, 
we have 
\begin{align*}
 \sum_{\xi \in k} 
 \phi_{\zeta'} 
 (1+\hat{\xi}^{2,4}\delta_2^{-1} )^{-1} 
 \psi_E (\delta_2^2 \hat{\xi}^{2,4}) &= 
 \sum_{\xi \in k} 
 \phi_{\zeta'} 
 (1+ \hat{\xi}^{1,3} \delta_2^{-2} ) 
 \psi_E (\delta_2^2 \hat{\xi}^{2,4}) \\ 
 &= \sum_{\xi \in k} 
 \chi_2 \left( \Tr_{k/\mathbb{F}_2} (\xi +\xi^3) \right) = 
 -(-2)^{\frac{f+1}{2}} 
\end{align*}
by \eqref{eq:KerTr}, \eqref{oddphi1'} and Lemma \ref{phivarkappa}, 
because 
\[ 
 \lvert \{ (x,y) \in k^2 \mid x^2 +x =y^3 +y \} \rvert = 
 \lvert \mathcal{E}'(\mathbb{F}_q ) \rvert -1 
 = q -(-2)^{\frac{f+1}{2}} 
\]
by Lemma \ref{ratell} under the assumption 
$f \equiv 1 \bmod 8$. 
Thus we have proved the claim. 
\end{proof}
\begin{thm}\label{expLLC}
For $\zeta' \in k^{\times}$, 
$\chi \in (k^{\times})^{\vee}$ and 
$c \in \overline{\mathbb{Q}}_{\ell}^{\times}$, 
we have 
$\mathrm{LL}_{\ell} (\tau_{\zeta', \chi, c}) = 
 \pi_{\zeta', \chi, c}$. 
\end{thm}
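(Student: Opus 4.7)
The plan is to reduce Theorem \ref{expLLC} to Proposition \ref{indLLC} in two stages: first descend the special case $\chi = 1$, $c = 1$ from $F$ back to $K$, then use compatibility of $\mathrm{LL}_{\ell}$ with character twists to handle the general $(\chi,c)$.

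\textbf{Descent of the base case.} Set $\pi'_{\zeta'} = \mathrm{LL}_{\ell}(\tau_{\zeta'})$. The argument in the proof of Proposition \ref{indLLC} already exhibits $\pi'_{\zeta'} = \mathrm{c\mathchar`-Ind}_{L^{\times} U_{\mathfrak{J}}^1}^{\textit{GL}_2(K)} \Lambda'_{\zeta'}$ with $\Lambda'_{\zeta'}$ agreeing with $\Lambda_{\zeta'}$ on $U_{\mathfrak{J}}^1$ and on $K^{\times}$ (the latter by Remark \ref{ellLLC} and Lemma \ref{dettau}). The equality $\Lambda'_{F,\zeta'} = \Lambda_{F,\zeta'}$ established in Proposition \ref{indLLC}, combined with the tame-lifting formula $\Lambda_{F,\zeta'}(x) = \epsilon_{F/K}^{v_{L'}(x)} \Lambda_{\zeta'}(\Nr_{L'/L}(x))$ on ${L'}^{\times}$ (and its analogue for the primed character), forces $\Lambda'_{\zeta'} = \Lambda_{\zeta'}$ on the subgroup $\Nr_{L'/L}({L'}^{\times}) \subset L^{\times}$. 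Combined with agreement on $K^{\times}$, local class field theory (noting that $L^{\times}/\Nr_{L'/L}({L'}^{\times}) \cong \Gal(L'/L)$ is small cyclic) then pins down $\Lambda'_{\zeta'} = \Lambda_{\zeta'}$, yielding $\mathrm{LL}_{\ell}(\tau_{\zeta'}) = \pi_{\zeta'}$.

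\textbf{Twist to general $\chi, c$.} The local Langlands correspondence is compatible with character twists: for a character $\eta \colon W_K \to \overline{\mathbb{Q}}_{\ell}^{\times}$ and the corresponding $\tilde{\eta} = \eta \circ \mathrm{Art}_K^{-1}$ on $K^{\times}$, one has $\mathrm{LL}_{\ell}(\tau_{\zeta'} \otimes \eta) = \pi_{\zeta'} \otimes (\tilde{\eta} \circ \det)$. Applying this with $\eta = (\chi \circ \lambda) \otimes \phi_c$ gives
\[
 \mathrm{LL}_{\ell}(\tau_{\zeta', \chi, c}) = \pi_{\zeta'} \otimes (\tilde{\eta}_{\chi,c} \circ \det).
\]
It remains to verify $\pi_{\zeta'} \otimes (\tilde{\eta}_{\chi,c} \circ \det) = \pi_{\zeta', \chi, c}$, i.e., that $\tilde{\eta}_{\chi,c} \circ \det$ realises the ratio $\Lambda_{\zeta',\chi,c}/\Lambda_{\zeta'}$ on $L^{\times} U_{\mathfrak{J}}^1$. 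The unramified factor $\phi_c$ yields $\tilde{\phi}_c(\varpi) = c$, which via $\det \varphi = -\varpi$ and $\tilde{\phi}_c(-1) = 1$ (since $-1 \in U_K^1$ in residue characteristic $2$) gives the factor $c$ on $\varphi$. The ramified factor $\chi \circ \lambda$ corresponds via $\mathrm{Art}_K$ to a tame character of $K^{\times}$ factoring through $k^{\times}$; since $L/K$ is totally ramified, $\overline{\Nr_{L/K}(a)} = \bar{a}^2$ for $a \in \mathcal{O}_L^{\times}$, and since squaring is bijective on $k^{\times}$ (as $q-1$ is odd), the requirement $(\tilde{\eta}_{\chi,c} \circ \det)(a) = \chi(\bar{a})$ uniquely determines this character. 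Both twist characters are trivial on $U_{\mathfrak{J}}^1$ because $\det(U_{\mathfrak{J}}^1) \subset U_K^1$, so the identification extends to all of $L^{\times} U_{\mathfrak{J}}^1$.

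\textbf{Main obstacle.} The critical technical computation is identifying, via local class field theory on the totally ramified Kummer-type extension $K(\varpi^{1/(2(q-1))})$, the tame character of $K^{\times}$ corresponding to $\chi \circ \lambda$ under the Artin map in the geometric Frobenius normalization, and matching it precisely (signs and exponents) with the unique tame character of $k^{\times}$ dictated by the ratio $\Lambda_{\zeta',\chi,c}/\Lambda_{\zeta'}$ on $\mathcal{O}_L^{\times}$.
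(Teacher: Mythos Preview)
Your proposal is correct but the descent step takes a genuinely different route from the paper.

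The paper reduces to $\chi=1$, $c=1$ by character twists (stated without detail, just as you do), and then invokes \cite[50.3 Lemma~1 and (50.3.2)]{BHLLC}: given Proposition~\ref{indLLC}, it suffices to verify that $\Lambda_{\zeta'}|_{K^\times}\circ\mathrm{Art}_K^{-1}=(\det\tau_{\zeta'})\otimes|\cdot|^{-1}$, that $\Lambda_{\zeta'}^3(x)=\epsilon_{F/K}^{v_{L'}(x)}\Lambda_{F,\zeta'}(x)$ for $x\in L^\times$, and that $\Lambda_{\zeta'}^3|_{U_{\mathfrak J}^1}=\Lambda_{F,\zeta'}|_{U_{\mathfrak J}^1}$. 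The last of these requires an explicit matrix computation, conjugating the unipotent $\begin{psmallmatrix}1&b\\0&1\end{psmallmatrix}$ by $\varphi$ to land in $U^2_{\mathfrak J_F}$.

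Your descent avoids both the \cite{BHLLC} citation and that computation: you use the tame-lifting formulas to pull the equality $\Lambda'_{F,\zeta'}=\Lambda_{F,\zeta'}$ back through $\Nr_{L'/L}$, and then observe that $K^\times\cdot\Nr_{L'/L}({L'}^\times)=L^\times$. This last point is correct but your justification is slightly imprecise: when $f$ is odd the cubic $L'/L$ is not Galois (since $\mu_3\not\subset L$), so ``$\Gal(L'/L)$'' is undefined; however in that case the norm map is already surjective by the norm limitation theorem. When $f$ is even, $L'/L$ is tame cyclic of degree $3$ and the image of $\mathcal O_K^\times$ in $L^\times/\Nr_{L'/L}({L'}^\times)\simeq k^\times/(k^\times)^3$ is everything, so the product is again all of $L^\times$. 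Combined with the agreement on $U^1_{\mathfrak J}$ already extracted from the proof of Proposition~\ref{indLLC}, this gives $\Lambda'_{\zeta'}=\Lambda_{\zeta'}$ on all of $L^\times U^1_{\mathfrak J}$. This is a cleaner and more self-contained argument than the paper's.

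On the twist step you and the paper are aligned; you give more detail and correctly isolate the one nontrivial point, namely matching $\widetilde{\chi\circ\lambda}$ on $\mathcal O_K^\times$ with the character $\xi\mapsto\chi(\xi^{1/2})$ dictated by $\overline{\Nr_{L/K}(a)}=\bar a^{\,2}$. The computation $\lambda_{\mathrm{Art}_K(u)}^2=\mathrm{Art}_K(u)(\varpi^{1/(q-1)})/\varpi^{1/(q-1)}$ reduces this to the standard tame reciprocity sign, and one also needs $\widetilde{\chi\circ\lambda}(\varpi)=1$, which follows since $\varpi=\Nr(\varpi^{1/(q-1)})$ (as $q-1$ is odd) forces $\lambda_{\mathrm{Art}_K(\varpi)}\in\{\pm1\}$, hence $\overline{\lambda}_{\mathrm{Art}_K(\varpi)}=1$ in residue characteristic $2$.
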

\begin{proof}
We may assume that 
$\chi=1$ and $c=1$ by character twists. 
By \cite[50.3 Lemma 1 and (50.3.2)]{BHLLCGL2} and 
Proposition \ref{indLLC}, 
it suffices to show that 
\begin{align*}
 &\Lambda_{\zeta'}|_{K^{\times}} \circ \mathrm{Art}_K ^{-1} 
 =(\det \tau_{\zeta'}) \otimes \lvert \cdot \rvert^{-1}, \\ 
 &\Lambda_{\zeta'} ^3 |_{U_{\mathfrak{J}}^1} = 
 \Lambda_{F, \zeta'} |_{U_{\mathfrak{J}}^1}, \\ 
 &\Lambda_{\zeta'} ^3 (x)= \epsilon_{F/K} ^{v_{L'} (x)} 
 \Lambda_{F, \zeta'} (x) \ \textrm{for} \ 
 x \in L^{\times}. 
\end{align*}
The first equality follows from 
Lemma \ref{dettau}, and 
the third equality follows from 
the definition of $\Lambda_{F, \zeta'}$. 

We are going to show the second equality. 
We put 
\begin{equation*}
 U'_{\mathfrak{J}} =\biggl\{ 
 \begin{pmatrix}
 a & b \\
 c & d
 \end{pmatrix} 
 \in \mathfrak{J} \ \bigg| \ 
 a \equiv d \equiv 1 ,\ 
 b \equiv 0 \bmod \mathfrak{p} \biggr\}, \quad 
 U''_{\mathfrak{J}} =\biggl\{ 
 \begin{pmatrix}
 1 & b \\
 0 & 1 
 \end{pmatrix} 
 \in \mathfrak{J} \biggr\}. 
\end{equation*}
Then we have 
$\Lambda_{\zeta'} ^3 |_{U'_{\mathfrak{J}}} = 
 \Lambda_{F, \zeta'} |_{U'_{\mathfrak{J}}}$ 
by the definition of 
$\Lambda_{\zeta'}$ and 
$\Lambda_{F, \zeta'}$, 
because 
$U'_{\mathfrak{J}} \subset U_{\mathfrak{J}_F}^2$. 
On the other hand, we have 
\begin{align*}
 \Lambda_{F, \zeta'} 
 \biggl( 
 \begin{pmatrix}
 1 & b \\
 0 & 1 
 \end{pmatrix} \biggr) 
 &= 
 \Lambda_{F, \zeta'} 
 \biggl( \varphi 
 \begin{pmatrix}
 1 & b \\
 0 & 1 
 \end{pmatrix} \varphi^{-1} \biggr) 
 = 
 \Lambda_{F, \zeta'} 
 \biggl( 
 \begin{pmatrix}
 1 & 0 \\
 b\varpi & 1 
 \end{pmatrix} \biggr) \\ 
 &= 
 \psi_F (\hat{\zeta}'^{-2} b) 
 = \psi_K (\hat{\zeta}'^{-2} b)^3 
 =
 \Lambda_{\zeta'} 
 \biggl( 
 \begin{pmatrix}
 1 & b \\
 0 & 1 
 \end{pmatrix} \biggr)^3 
\end{align*}
for $b \in \mathcal{O}_K$. 
Therefore we have the claim, 
because $U_{\mathfrak{J}}^1$ 
is generated by 
$U'_{\mathfrak{J}}$ and 
$U''_{\mathfrak{J}}$. 
\end{proof}

\noindent
Naoki Imai\\ 
Graduate School of Mathematical Sciences, 
The University of Tokyo, 3-8-1 Komaba, Meguro-ku, 
Tokyo 153-8914, Japan\\ 
naoki@ms.u-tokyo.ac.jp\\ 

\noindent
Takahiro Tsushima\\ 
Department of Mathematics and Informatics, 
Faculty of Science, Chiba University, 
1-33 Yayoi-cho, Inage, Chiba, 263-8522, Japan\\
tsushima@math.s.chiba-u.ac.jp

\end{document}